\newtheorem{thm}{Theorem}[section]
\newtheorem{prop}[thm]{Proposition}
\newtheorem{lem}[thm]{Lemma}
\newtheorem{cor}[thm]{Corollary}
\theoremstyle{definition}
\newtheorem{defi}[thm]{Definition} 
\newtheorem{example}[thm]{Example} 
\newtheorem{remark}{Remark}
\numberwithin{equation}{section}
\DeclareMathOperator{\Id}{Id}
 \DeclareMathOperator{\Ad}{Ad}
  \DeclareMathOperator{\im}{Im}
  \DeclareMathOperator{\st}{st}
 \DeclareMathOperator{\Z}{Z}
 \DeclareMathOperator{\coker}{coker}
\begin{document}

\title{Universal central extensions of braided crossed modules in Lie algebras}
	\author{A. Fern\'andez-Fari\~na}
	\address{[A. Fern\'andez-Fari\~na] Departamento de Matem\'aticas, Universidade de Santiago de Compostela, 15782, Spain.}
\email{alejandrofernandez.farina@usc.es}

	\author{M. Ladra}
\address{[M. Ladra] Departamento de Matem\'aticas, Instituto de Matem\'aticas, Universidade de Santiago de Compostela, 15782, Spain.}
\email{manuel.ladra@usc.es}

\thanks{This work was partially supported by Agencia Estatal de Investigaci\'on (Spain), grant MTM2016-79661-P (European FEDER
support included, UE) and by Xunta de Galicia through the Competitive Reference Groups (GRC), ED431C 2019/10. The first author is also supported by a scholarship of Xunta de Galicia (Spain), grant ED481A-2017/064.}

\begin{abstract}
 In this paper, we give a natural braiding on the universal central extension of a crossed module of Lie algebras with a given braiding and construct the universal central extension of a braided crossed module of Lie algebras, showing that, when one of the constructions exists, both exist and coincide.	
\end{abstract}

\subjclass[2010]{17D99, 18D10}
\keywords{Lie algebra; crossed module; braided; universal central extension}

\maketitle

\section*{Introduction}
In \cite{Fuk} Fukushi did a braided version of the results of universal central extensions of crossed modules of groups given by Norrie in \cite{Norr}. He did in the sense that he found a natural braiding on the universal central extension of crossed modules of groups which works well with one braided crossed module given. But it is not the universal central extension of braided crossed modules since we need some restrictions in the notion of center and commutator when we add the braidings.

In this paper we will do a braided version of the results given by Casas and Ladra in \cite{PCMLie} for braided crossed modules of Lie $K$-algebras in the sense that we will study the universal braided crossed module. For that we will need the definition of center and commutator given by Huq in \cite{Huq} in the braided category.

\section{Preliminaries}

\begin{defi}
	A \emph{crossed module of Lie algebras} is a pair $(M\xrightarrow{\partial}N,\cdot)$ where:
	
	\begin{itemize}
		\item $\cdot$ is Lie left-action of $N$ on $M$, i.e. a $K$-bilinear map $\cdot\colon N\times M\rightarrow M$, $(n,m)\mapsto n\cdot m$, satisfying for all $m,m'\in M$, $n,n'\in N$:
		\begin{align*}
		[n,n']\cdot m & =n\cdot(n'\cdot m)-n'\cdot(n\cdot m),\\
		n\cdot[m,m'] & =[n\cdot m,m']+[m,n\cdot m'], 
		\end{align*}
		
		\item $\partial\colon M\xrightarrow{}N$ is a Lie $K$-homomorphism which satisfies the following properties:
		\begin{itemize}
			\item
			$\partial$ is $N$-equivariant Lie $K$-homomorphism:
\begin{equation*}
			\partial(n\cdot m)=\Ad(n)(\partial(m))=[n,\partial(m)],
			\end{equation*}
			\item
			$\partial$ satisfies the Peiffer identity:
			\begin{equation*}
			\partial(m)\cdot m'=\Ad(m)(m')=[m,m'],
			\end{equation*}
		\end{itemize}
		
	\end{itemize}
\end{defi}

\begin{example}
	The identity map $M\xrightarrow{\Id_M}M$ with the adjoint action, $m\cdot m'=[m,m']$ defines a crossed module.
\end{example}

\begin{defi}
	Let $(M\xrightarrow{\partial} N, \cdot)$ and $(L\xrightarrow{\delta}H,*)$ be two crossed modules of Lie algebras.
	
	A \emph{morphism} between that crossed modules is a pair of Lie $K$-homomorphisms $(f_1,f_2)$, $f_1\colon M \xrightarrow{}L$ and $f_2\colon N\xrightarrow{}H$, such that:
	\begin{align}
	f_1(n\cdot m)& =f_2(n)*f_1(m),  \quad  \text{for all} \ \  m\in M, n\in N,	 \label{XLieH1} \tag{XLieH1} \\
	\delta \circ f_1&=f_2\circ \partial. \label{XLieH2} \tag{XLieH2}
	\end{align}

\end{defi}

The category of crossed modules of Lie $K$-algebras is a semi-abelian category in the sense of \cite{JLT} and it will be denoted by
$\textbf{\textit{X}}(\textbf{\textit{LieAlg}}_K)$.

The following definitions will be also necessary for the development of the paper.

\begin{defi}
 An \emph{extension} in $\textbf{\textit{X}}(\textbf{\textit{LieAlg}}_K)
 $ is a regular epimorphism, i.e.   a surjective morphism.

  Following the theory in~\cite{JaKe}, we have three kinds of extensions: trivial, normal and central.

  An extension $\Phi \colon (X_1\xrightarrow{\rho}X_2,*)\longrightarrow (M\xrightarrow{\partial} N,\cdot)$ is \emph{trivial} if the induced square
\begin{center}
	\begin{tikzcd}(X_1\xrightarrow{\rho}X_2,*) \arrow[r,"\Phi"]\arrow[d] & (M\xrightarrow{\partial} N,\cdot)\arrow[d]\\ (X_1\xrightarrow{\rho}X_2,*)_{ab} \arrow[r,"\Phi_{ab}"] & (M\xrightarrow{\partial} N,\cdot)_{ab}
	\end{tikzcd}
\end{center}
is a pullback in $\textbf{\textit{X}}(\textbf{\textit{LieAlg}}_K)$.

An extension is \emph{normal} if one of the projections of the kernel pair is trivial.

An extension is \emph{central} if there exists another extension $\Psi\colon (Y_1\xrightarrow{\varphi}Y_2,\star)\longrightarrow(M\xrightarrow{\partial}N,\cdot)$ such that $\pi_2$ (also denoted $\Psi^*(\Phi)$) in the pullback
\begin{center}
	\begin{tikzcd}(X_1\xrightarrow{\rho}X_2,*)\times_{(M\xrightarrow{\partial}N,\cdot)}(Y_1\xrightarrow{\varrho}Y_2,\star) \arrow[r,"\pi_1"]\arrow[d,"\pi_2"] & (X_1\xrightarrow{\rho}X_2,*)\arrow[d,"\Phi"]\\ (Y_1\xrightarrow{\varrho}Y_2,\star) \arrow[r,"\Psi"] & (M\xrightarrow{\partial} N,\cdot),
	\end{tikzcd}
\end{center}
 is trivial.

The central extensions in $\textbf{\textit{X}}(\textbf{\textit{Lie}}_K)$ over an object $(M\xrightarrow{\partial}N,\cdot)$ conform another category, whose morphisms are the morphisms of crossed modules $\Theta\colon (X_1\xrightarrow{\rho}X_2,*)\longrightarrow (Y_1\xrightarrow{\varrho}Y_2,\star)$ making commutative the diagram
\begin{center}
	\begin{tikzcd}
	(X_1\xrightarrow{\rho}X_2,*)\arrow[rr,"\Theta"]\arrow[dr,"\Phi"'] & & (Y_1\xrightarrow{\varrho}Y_2,\star) \arrow[dl,"\Psi"] \\ &(M\xrightarrow{\partial}N,\cdot).
	\end{tikzcd}
\end{center}

A central extension $\mathcal{U}$ is said to be \emph{universal} (over
$(M\xrightarrow{\partial}N,\cdot)$) if it is the initial object in the category of central extensions over $(M\xrightarrow{\partial}N,\cdot)$. From the definition, it is clear that the universal central extension is unique up to isomorphisms.
\end{defi}

The notion of center of a object  was defined in \cite{Huq}, in a category with certain properties. This construction only needs that the category has finite products and zero object.


The category $\textbf{\textit{X}}(\textbf{\textit{Lie}}_K)$ has center in the sense of Huq \cite{Huq}, and it was defined in \cite{PCMLie}.

\begin{defi}
	The \emph{center} of a crossed module of Lie $K$-algebras $(M\xrightarrow{\partial}N,\cdot)$ is the crossed submodule $Z((M\xrightarrow{\partial}N,\cdot))=(M^N\xrightarrow{\partial|_{M^N}}\st_{N}(M)\cap Z(N),\cdot_Z)$, where:
	\begin{itemize}
		\item $Z(N)=\{n\in N\mid [n,n']=0, \ n'\in N \}$ is the center of the Lie $K$-algebra $N$,
		\item $M^N=\{m\in M\mid n\cdot m=0, \ n\in N\}$,
		\item $\st_N(M)=\{n\in N\mid n\cdot m=0, \  m\in M \}$,
	\end{itemize}
	and $\cdot_Z$ is the induced action, which means that is the zero action by the definition of $M^N$.
\end{defi}

In our semi-abelian context, the concepts of normal and central extension are equivalent, and a more practical characterization is the following:

	An extension $(M\xrightarrow{\partial}N)\xrightarrow{f=(f_1,f_2)}(L\xrightarrow{\delta}H,*)$ is central if and only if
$\ker(f)=(\ker(f_1)\xrightarrow{\partial|_{\ker(f_1)}}\ker(f_2),\cdot_{\ker})$ is a crossed submodule of the center, i.e. of the crossed submodule $(M^N\xrightarrow{\partial|_{M^N}}\st_{N}(M)\cap Z(N),\cdot_Z)$.

The notions of commutator of a crossed module and of a perfect crossed module were introduced in \cite{PCMLie}. This notion of commutator coincides in this category with the idea of commutator given by Huq in \cite{Huq} in a category with products, zero objects, kernels and cokernels.

If $L$ is a Lie $K$-algebra and $S\subset L$, we denote $\langle S\rangle_L$ the Lie subalgebra of $L$ generated by $S$, that means, the intersection of all subalgebras that contains $S$.

\begin{defi}
	Let $(M\xrightarrow{\partial}N,\cdot)$ be a crossed module of Lie $K$-algebras. The \emph{commutator crossed submodule} is $(D_N(M)\xrightarrow{\partial|_{D_N(M)}}[N,N],\cdot_C)$ where $\cdot_C$ is the induced action and:
	\begin{itemize}
\item $D_N(M)=\langle\{x\in M \mid \text{there exist} \ m\in M ,  n\in N  \text{ such that} \ x=n\cdot m \}\rangle_{M}$
		\item $[N,N]=\langle\{y\in N\mid  \text{there exist} \  n,n'\in N  \text{ such that} \  y=[n,n']\}\rangle_N $ is the commutator of the Lie $K$-algebra $N$.
	\end{itemize}
	
\end{defi}

\begin{defi}
	We will say that a crossed module Lie $K$-algebras $(M\xrightarrow{\partial}N,\cdot)$ is a perfect crossed module Lie $K$-algebras if it coincides with its commutator crossed submodule, i.e. $M=D_N(M)$ and $N=[N,N]$.
\end{defi}

The universal central extension is totally related with the concept non-abelian tensor product. In the case of groups, Brown and Loday  in \cite{BrLod} define the non-abelian tensor product of groups and proved that the universal central extension is the non-abelian tensor product with the epimorphism that takes generators to its commutator. The same happen in case of Lie algebras with the non-abelian tensor product of Lie algebras introduced by Ellis in \cite{Ellis}.

In the cases of crossed modules of groups \cite{Norr} and crossed modules of Lie algebras \cite{PCMLie}, is needed too the notion of non-abelian tensor product.

For the case of non-abelian tensor product of Lie $K$-algebras, the definition introduced by Ellis in \cite{Ellis} is the following one:
\begin{defi}
	Let $M$ and $N$ be two Lie $K$-algebras such that $M$ acts in $N$ by $\cdot$ and $N$ acts in $M$ with $*$.
	
	The \emph{non-abelian tensor product}, denoted by  $M\otimes N$, is defined as the Lie $K$-algebra generated by the symbols $m\otimes n$ with $m\in M$, $n\in N$ and the relations:
	\begin{align}
	\lambda(m\otimes n)&=\lambda m\otimes n=m\otimes \lambda n, \label{RTLie1}\tag{T1}\\
	(m+m')\otimes n&=m\otimes n+m'\otimes n, \label{RTLie2} \tag{T2} \\
	m\otimes (n+n')&=m\otimes n+m\otimes n',  \notag \\
	[m,m']\otimes n&=m\otimes (m'\cdot n)-m'\otimes (m\cdot n), \label{RTLie3} \tag{T3} \\
	m\otimes [n,n']&=(n'*m)\otimes n-(n*m)\otimes n',\notag \\
	[(m\otimes n),(m'\otimes n')]&=-(n*m)\otimes(m'\cdot n'),\label{RTLie4} \tag{T4}
	\end{align}
	where $m,m'\in M$, $n,n'\in N$, $\lambda  \in K$.
\end{defi}

	When we talk about $M\otimes M$ we will assume that $M$ acts on itself by the adjoint action.

Now we will consider braidings in crossed modules of Lie algebras, whose definitions were introduced in \cite{FFBraidI,Ulua}  (see also  \cite{FFBraidII}).

\begin{defi}
	Let $(M\xrightarrow{\partial}N,\cdot)$ be a crossed module of Lie $K$-algebras.
	
	A \emph{braiding} (or \emph{Peiffer lifting}) on the crossed module is a $K$-bilinear map
	$\{-,-\}\colon N\times N\xrightarrow{ } M$ satisfying:
	\begin{align}
	\partial\{n,n'\}&=[n,n'], \label{BLie1}\tag{BLie1} \\
	\{\partial m, \partial m' \}&=[m,m'], \label{BLie2} \tag{BLie2}\\ 
	\{\partial m, n \}&=-n\cdot m, \label{BLie3} \tag{BLie3} \\	
	\{n,\partial m \}&=n\cdot m,  \label{BLie4} \tag{BLie4} \\
	\{n,[n',n'']\}&=\{[n,n'],n''\}-\{[n,n''],n'\}, \label{BLie5}\tag{BLie5}\\
    \{[n,n'],n''\}&=\{n,[n',n'']\}-\{n',[n,n'']\}, \label{BLie6}\tag{BLie6}
	\end{align}
	for $m,m'\in M$, $n,n',n''\in N$.
	
	If $\{-,-\}$ is a braiding on $\mathcal{X}$ we will say that $(M\xrightarrow{\partial}N,\cdot,\{-,-\})$ is a braided crossed module of Lie $K$-algebras.
\end{defi}

\begin{example}\label{ExBra} \hfill
	\begin{enumerate}
	\item There is a canonical braidings on $(M\xrightarrow{\Id_M}M,[-,-])$, given by 	
	\[M\times M\xrightarrow{}M, \ (m,m')\mapsto [m,m'].\]
	
	\item There is a canonical braiding $(M\otimes M\xrightarrow{\partial} M,\cdot)$, $m\otimes m'\mapsto [m,m']$ with $m\cdot (m_1\otimes m_2)=m\otimes [m_1,m_2]$. It is given by
	\[M\times M\xrightarrow{}M\otimes M, \ (m,m')\mapsto m\otimes m'.\]
	\end{enumerate}
\end{example}

\begin{defi}
	A \emph{morphism of braided crossed modules of Lie $K$-algebras}, denoted as $(M\xrightarrow{\partial}N,\cdot,\partial,\{-,-\})\xrightarrow{(f_1,f_2)}(L\xrightarrow{\delta}H,*,\lParen-,-\rParen)$, is a homomorphism of crossed modules of Lie $K$-algebras such that:
\begin{equation}\label{BXLieH3}
	f_1(\{n,n'\})=\lParen f_2(n),f_2(n')\rParen, \qquad \text{for}   \ \ n,n'\in N. \tag{BXLieH3}
	\end{equation} 
\end{defi}


In the case of the braiding category, the concept of braiding changes a little the concept of center and commutator from the category of crossed modules, appearing the following subobjects using the definition given by Huq \cite{Huq} in the general case.

\begin{defi}
	The \emph{center} of a braided crossed module $(M\xrightarrow{\partial}N,\cdot,\{-,-\})$ is the braided crossed submodule $Z((M\xrightarrow{\partial}N,\cdot))=(M^N\xrightarrow{\partial|_{M^N}}\Z_B(N),\cdot_Z,\{-,-\}_Z)$, where:
	\begin{equation*}
		\Z_B(N)=\{n\in N\mid \{n,n'\}=0=\{n',n\}, \  n'\in N \}.
	\end{equation*}
	$\cdot_Z$ is the induced action and $\{-,-\}_Z$ the induced braided, which means that is the zero action and the zero braiding by the definition of $M^N$ and $\Z_B(N)$.
\end{defi}

\begin{remark}
	It is easy to show that the following inclusions of subalgebras are true:
	\[ M^N\subset Z(M), \qquad  Z_B(N)\subset Z(N)\cap \st_N(M).\]
	
	In addition, if we use the properties \eqref{BLie3} and \eqref{BLie4}, then we have that $M^N=\{m\in M\mid \partial(m)\in Z_B(N)\}$.
\end{remark}

Imitating the case of crossed modules, we have the following definition:

\begin{defi}
	An \emph{extension of braided crossed modules} of Lie $K$-algebras is a morphism $(M\xrightarrow{\partial}N,\cdot,\{-,-\})\xrightarrow{(f_1,f_2)}(L\xrightarrow{\delta}H,*,\lParen-,-\rParen)$
	such that $f_1$ and $f_2$ are surjective morphisms.
	
	Besides, we will say that it is \emph{central} if $(\ker(f_1)\xrightarrow{\partial|_{\ker(f_1)}}\ker(f_2),\cdot_{\ker},\{-,-\}_{\ker})$ is a braided crossed submodule of $(M^N\xrightarrow{\partial|_{M^N}}Z_B(N),\cdot_Z,\{-,-\}_Z)$, that is, the kernel is ``inside'' the center.
\end{defi}

\begin{defi}
		We will say that a extension of braided crossed modules of Lie $K$-algebras is a \emph{compatible central extension} if it is central as a crossed module extension.
\end{defi}

It is immediate using inclusions that every central extension in the category of braided crossed modules of Lie algebras is a compatible central extension. The next example shows that not every compatible central extension is a central extension.

\begin{example}
	We will take $M$ as an abelian $K$-Lie algebra of finite dimension $n$, i.e., $M$ is isomorphic as vectorial space to $K^n$ with the Lie bracket $[x,y]=0$ for $x,y\in M$.
	
	Using the second example in Example~\ref{ExBra}, we have that $(M\otimes M\xrightarrow{\partial} M,\cdot)$, with $\partial=0$, $m\cdot (m_1\otimes m_2)=m\otimes [m_1,m_2]=m\otimes 0=0$ and $\{m,m'\}=m\otimes m$ is a braided crossed module of Lie $K$-algebras.

	Note that, since $M$ is abelian, we have that $M\otimes M$ is isomorphic as vector space to the usual tensor product and its Lie bracket is $0$.

	In particular, we have for $M=K^2$ and $M=K^3$ the braided crossed modules $(K^2\otimes K^2\xrightarrow{0}K^2,0,-\otimes-)$ and $(K^3\otimes K^3\xrightarrow{0}K^3,0,-\otimes-)$, where we can think  the tensor product as the usual one.
	
	We will take the projection $K^3\xrightarrow{\pi}K^2$, $(x,y,z)\mapsto (x,y)$. Sice $\pi$ is surjective, is easy to show that $\pi\otimes \pi\colon K^3\otimes K^3\xrightarrow{}K^2\otimes K^2$ is surjective and
	\[
	 (K^3\xrightarrow{0}K^3,0,-\otimes-)\xrightarrow{ \ (\pi\otimes\pi,\pi) \ } (K^2\otimes K^2\xrightarrow{0}K^2,0,-\otimes-),
	\]
	is an extension of braided crossed modules of Lie $K$-algebras.
	
	We will show that this extension is a compatible central extension that is not a central extension. Is easy to show that the correspondent subalgebras are the following ones:
	\begin{itemize}
		\item $(K^3\otimes K^3)^{K^3}=K^3\otimes K^3$,
		\item $\st_{K^3}(K^3\otimes K^3)=K^3$,
		\item $Z(K^3)=K^3$.
		\item $Z_B(K^3)=0$ (since it is the usual tensor product).
	\end{itemize}
	It is immediate that $\ker(\pi\otimes \pi)\subset (K^3\otimes K^3)^{K^3}=K^3\otimes K^3$ and $\ker(\pi)\subset Z(K^3)\cap \st_{K^3}(K^3\otimes K^3)=K^3$, so the extension is a compatible central extension.
	
	However $0\neq\ker(\pi)=\{(x,y,z)\in K^3\mid y=z=0\}\nsubseteq Z_B(K^3)=0$, so the extension is not central.
\end{example}

\begin{defi}
	Let $(M\xrightarrow{\partial}N,\cdot,\{-,-\})$ be a braided crossed module of Lie $K$-algebras. The commutator braided crossed submodule is given by $(B_N(M)\xrightarrow{\partial|_{B_N(M)}}[N,N],\cdot_C,\{-,-\}_C)$ where $\cdot_C$ and $\{-,-\}_C$ are the induced operations and
	\begin{equation*}
		B_N(M)=\big\langle \big \{x\in M\mid \text{there exist} \ n,n'\in N \ \text{such that} \ x=\{n,n'\} \big\}\big\rangle_{M}.
	\end{equation*}
\end{defi}

\begin{remark}
	$B_N(M)$ is a ideal of $M$ and we have the following inclusion of subalgebras:
	$$[M,M]\subset D_{N}(M)\subset B_N(M).$$
\end{remark}

\begin{defi}
	We will say that a braided crossed module Lie $K$-algebras $(M\xrightarrow{\partial}N,\cdot,\{-,-\})$ is a perfect braided crossed module Lie $K$-algebras if it coincides with its commutator braided crossed submodule, that is $M=B_N(M)$ and $N=[N,N]$.
\end{defi}

\section{The universal central extension for perfect braided crossed modules of Lie Algebras}
	
As in the case for crossed modules of Lie $K$-algebras we have the following definition for the universal central extension in the case of braided crossed modules of Lie algebras.
\begin{defi}
	We will say that $\mathcal{U}\xrightarrow{u}\mathcal{X}$ is the universal central extension of $\mathcal{X}$ if and only if, when $\mathcal{Z}\xrightarrow{f}\mathcal{X}$ is another central extension of braided crossed modules of $K$-algebras, exists a unique morphism $h^X\colon \mathcal{U}\xrightarrow{}\mathcal{Z}$ such that $u=f\circ h^X$.
\end{defi}
	
In this section we will find the expression of this initial universal object when it exists and we will try to characterize this fact. 	
	
\begin{lem}\label{braidmorph}
	If $(M\xrightarrow{\partial}N,\cdot, \{-,-\})$ is a braided crossed module of Lie algebras, then $N\otimes N\xrightarrow{\Phi_1} M$ defined by $n\otimes n'\mapsto \{n,n'\}$ and $N\otimes N\xrightarrow{\Phi_2} N$ defined by $n\otimes n'\mapsto [n,n']$ are Lie $K$-homomorphisms.
	
	Besides $\Phi_1$ and $\Phi_2$ are simultaneously surjective if and only if the crossed module $(M\xrightarrow{\partial}N,\cdot, \{-,-\})$ is perfect.
\end{lem}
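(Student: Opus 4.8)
The plan is to treat the two assertions separately. For the first, since $N\otimes N$ is generated as a Lie algebra by the simple tensors $n\otimes n'$ and both candidate maps are prescribed on these generators, it suffices to check that $\Phi_1$ and $\Phi_2$ respect each of the defining relations \eqref{RTLie1}--\eqref{RTLie4} of the non-abelian tensor product, recalling that here $N$ acts on itself by the adjoint action, so that both actions $\cdot$ and $*$ become the bracket. The relations \eqref{RTLie1} and \eqref{RTLie2} amount to $K$-bilinearity, which holds for $\{-,-\}$ by hypothesis and for the bracket automatically. For $\Phi_2$ the relations \eqref{RTLie3} and \eqref{RTLie4} reduce, under the adjoint action, to the Jacobi identity and are standard, so $\Phi_2$ is the usual commutator homomorphism and needs no further comment.

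The interesting verifications are for $\Phi_1$. First I would observe that \eqref{RTLie3}, which in its two slots reads $[n,n']\otimes n''=n\otimes[n',n'']-n'\otimes[n,n'']$ and $n\otimes[n',n'']=[n'',n]\otimes n'-[n',n]\otimes n''$, is carried by $\Phi_1$ exactly onto the axioms \eqref{BLie6} and \eqref{BLie5} respectively, after using $K$-bilinearity of $\{-,-\}$ together with the antisymmetry of the bracket to reconcile the signs. The main obstacle is relation \eqref{RTLie4}: under the adjoint action it sends $[n_1\otimes n_2,n_3\otimes n_4]$ to $-[n_2,n_1]\otimes[n_3,n_4]$, so I must verify the identity $[\{n_1,n_2\},\{n_3,n_4\}]=\{[n_1,n_2],[n_3,n_4]\}$. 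This is the only point where the compatibility axioms between $\partial$ and $\{-,-\}$ are essential: applying \eqref{BLie2} with $m=\{n_1,n_2\}$ and $m'=\{n_3,n_4\}$ gives $[\{n_1,n_2\},\{n_3,n_4\}]=\{\partial\{n_1,n_2\},\partial\{n_3,n_4\}\}$, and then \eqref{BLie1} rewrites the two arguments as $[n_1,n_2]$ and $[n_3,n_4]$, yielding the required equality. The same identity shows $\Phi_1$ respects \eqref{RTLie4}.

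For the second assertion, note that the image of a Lie homomorphism is a subalgebra, and that $N\otimes N$ is spanned as a $K$-vector space by simple tensors (relation \eqref{RTLie4} expresses the bracket of two simple tensors again as a simple tensor). Hence $\im(\Phi_2)$ is the linear span of all $[n,n']$ and $\im(\Phi_1)$ is the linear span of all $\{n,n'\}$. The identity $[\{n_1,n_2\},\{n_3,n_4\}]=\{[n_1,n_2],[n_3,n_4]\}$ just established shows this span of braidings is already closed under the bracket, hence is a subalgebra containing every generator; since $B_N(M)$ is by definition the smallest such subalgebra, $\im(\Phi_1)=B_N(M)$, and the classical analogue gives $\im(\Phi_2)=[N,N]$. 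Therefore $\Phi_1$ is surjective if and only if $M=B_N(M)$, and $\Phi_2$ is surjective if and only if $N=[N,N]$; the two hold simultaneously exactly when $(M\xrightarrow{\partial}N,\cdot,\{-,-\})$ is perfect, which is the claim.
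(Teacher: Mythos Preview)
Your proof is correct and follows essentially the same route as the paper: verifying \eqref{RTLie1}--\eqref{RTLie4} for $\Phi_1$ via bilinearity, \eqref{BLie6}, \eqref{BLie5}, and the identity $[\{n_1,n_2\},\{n_3,n_4\}]=\{[n_1,n_2],[n_3,n_4]\}$ obtained from \eqref{BLie2} and \eqref{BLie1}, with $\Phi_2$ handled by the Jacobi identity. Your treatment of the second assertion is slightly more explicit than the paper's (which simply asserts $\im(\Phi_1)=B_N(M)$ and $\im(\Phi_2)=[N,N]$): you use \eqref{RTLie4} to argue that simple tensors span $N\otimes N$ linearly and that the span of braidings is bracket-closed, which is a clean way to justify the equality.
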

	\begin{proof}
		Since $\Phi_1$ and $\Phi_2$ are defined by generators, we only need to prove that they are well defined to prove that they are Lie $K$-homomorphisms.
		
		First we will prove that the two morphisms preserve the relations.
		
		Since $[-,-]$ and $\{-,-\}$ are $K$-bilinear is immediate that \eqref{RTLie1} and \eqref{RTLie2} are preserved.
		
		For that, since the two actions given to make  $N\otimes N$ are the Lie bracket, $[-,-]$, of $N$, we can rewrite the relations \eqref{RTLie3} and \eqref{RTLie4} to obtain the following ones:
		\begin{align*}
		[n_1,n_2]\otimes n_3& =n_1\otimes [n_2,n_3]-n_2\otimes [n_1,n_3],  \qquad   \qquad \qquad \text{(T3)}\\
		n_1\otimes [n_2,n_3]&=[n_3,n_1]\otimes n_2-[n_2,n_1]\otimes n_3,   \\
		[(n_1\otimes n_2),(n_3\otimes n_4)]&=[n_1,n_2]\otimes[n_3, n_4]. \qquad  \qquad   \qquad  \qquad \qquad \ \ \text{(T4)}
		\end{align*}

		Starting with \eqref{RTLie3} we have:
		\begin{align*}
		&\Phi_1([n_1,n_2]\otimes n_3)=\{[n_1,n_2],n_3\}=\{n_1,[n_2,n_3]\}-\{n_2,[n_1,n_3]\}\\&=\Phi_1(n_1\otimes [n_2,n_3])-\Phi_1(n_2\otimes [n_1,n_3])=\Phi_1(n_1\otimes [n_2,n_3]-n_2\otimes [n_1,n_3]),
		\end{align*}
		where we use \eqref{BLie6}.
		
		We will see now the second relation in \eqref{RTLie3}:
		\begin{align*}
		&\Phi_1(n_1\otimes [n_2,n_3])=\{n_1,[n_2,n_3]\}=\{[n_1,n_2],n_2\}-\{[n_1,n_3],n_2\}\\&=\Phi_1([n_1,n_2]\otimes n_3-[n_1,n_3]\otimes n_2)=\Phi_1(-[n_2,n_1]\otimes n_3+[n_3,n_1]\otimes n_2)\\&=\Phi_1([n_3,n_1]\otimes n_2-[n_2,n_1]\otimes n_3),
		\end{align*}
		where we use \eqref{BLie5}.
		
		For $\Phi_2$ is true using a similar argument together the Jacobi identity in both equalities.
		
		The proof for \eqref{RTLie4} is immediate for $\Phi_2$, since it is a Lie Morphism and both equalities are $[[n_1,n_2],[n_3,n_4]]$ after applying $\Phi_2$.
		
		For $\Phi_1$ we have the following equalities:
		\begin{align*}
		&\Phi_1([n_1\otimes n_2,n_3\otimes n_4])=[\Phi_1(n_1\otimes n_2),\Phi_1(n_3\otimes n_4)]=[\{n_1,n_2\},\{n_3,n_4\}]\\
		&=\{\partial\{n_1,n_2\},\partial\{n_3,n_4\}\}=\{[n_1,n_2],[n_3,n_4]\}=\Phi_1([n_1,n_2]\otimes[n_3,n_4]),
		\end{align*}
		where we use \eqref{BLie2} and \eqref{BLie1}.
		
		That is, $\Phi_1$ and $\Phi_2$ are well defined and are Lie $K$-homomorphisms.
		
		For the second part, we have that $\im(\Phi_1)=B_N(M)$ and $\im(\Phi_2)=[N,N]$. For this, $\Phi_1$ and $\Phi_2$ are simultaneously surjective if and only if the braided crossed module is perfect.
	\end{proof}

\begin{lem}\label{braidcommumorph}
	Let $(M\xrightarrow{\partial}N, \cdot,\{-,-\})$ be a braided crossed module of Lie $K$-algebras, and $(N\otimes N\xrightarrow{\Id_{N\otimes N}}N\otimes N,[-,-],[-,-])$ the braided crossed module seen in Example~\ref{ExBra}.
		
	Then the pair $(\Phi_1,\Phi_2)$, with $\Phi_1$ and $\Phi_2$ built in Lemma~\ref{braidmorph} is a morphism between $(N\otimes N\xrightarrow{\Id_{N\otimes N}}N\otimes N,[-,-],[-,-])$ and $(M\xrightarrow{\partial}N,\cdot,\{-,-\})$.
	
	Besides $\ker(\Phi_1)\subset (N\otimes N)^{(N\otimes N)}$ and $\ker(\Phi_2)\subset \Z_B(N\otimes N)$.
\end{lem}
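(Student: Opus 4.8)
The plan is to check the three defining axioms of a morphism of braided crossed modules for $(\Phi_1,\Phi_2)$, exploiting that $\Phi_1$ and $\Phi_2$ are already Lie $K$-homomorphisms by Lemma~\ref{braidmorph}, and then to deduce both kernel inclusions from the observation that the two relevant source subobjects degenerate to the ordinary Lie center of $N\otimes N$.

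First I would dispatch the crossed-module axioms \eqref{XLieH1} and \eqref{XLieH2}. Axiom \eqref{XLieH2} asks that $\partial\circ\Phi_1=\Phi_2\circ\Id_{N\otimes N}=\Phi_2$, and this is exactly \eqref{BLie1} evaluated on a generator: $\partial(\Phi_1(n\otimes n'))=\partial\{n,n'\}=[n,n']=\Phi_2(n\otimes n')$. For \eqref{XLieH1}, recall the source action is the bracket of $N\otimes N$; since $\Phi_1$ is a Lie homomorphism one has $\Phi_1([n\otimes n',x])=[\{n,n'\},\Phi_1(x)]$, and the Peiffer identity together with \eqref{BLie1} rewrites this as $\partial\{n,n'\}\cdot\Phi_1(x)=[n,n']\cdot\Phi_1(x)=\Phi_2(n\otimes n')\cdot\Phi_1(x)$, as required.

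Next I would verify the braiding compatibility \eqref{BXLieH3}. Here the source braiding is again the bracket, so for $x,y\in N\otimes N$ the claim is $\Phi_1([x,y])=\{\Phi_2(x),\Phi_2(y)\}$. Using that $\Phi_1$ is a Lie homomorphism, the left side equals $[\Phi_1(x),\Phi_1(y)]$, and on generators $x=n_1\otimes n_2$, $y=n_3\otimes n_4$ this reads $[\{n_1,n_2\},\{n_3,n_4\}]=\{[n_1,n_2],[n_3,n_4]\}$; this is precisely the identity established (via \eqref{BLie1} and \eqref{BLie2}) in the proof of Lemma~\ref{braidmorph} for the relation \eqref{RTLie4}, so it may simply be cited. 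This completes the proof that $(\Phi_1,\Phi_2)$ is a morphism of braided crossed modules.

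For the kernel inclusions, the key point is that for the source $(N\otimes N\xrightarrow{\Id}N\otimes N,[-,-],[-,-])$ both the action and the braiding are the bracket, so $(N\otimes N)^{(N\otimes N)}=Z(N\otimes N)=\Z_B(N\otimes N)$, the last equality because the two braiding conditions $[z,w]=0=[w,z]$ coincide by antisymmetry. It therefore suffices to prove $\ker(\Phi_1)\subset\ker(\Phi_2)\subset Z(N\otimes N)$. The first inclusion follows instantly from $\partial\circ\Phi_1=\Phi_2$, since $\Phi_1(x)=0$ forces $\Phi_2(x)=\partial(\Phi_1(x))=0$. For the second, I would use the rewritten relation \eqref{RTLie4} to obtain $[x,y]=\Phi_2(x)\otimes\Phi_2(y)$ for all $x,y$: expanding $x$ and $y$ as sums of generators and applying $[n_1\otimes n_2,n_3\otimes n_4]=[n_1,n_2]\otimes[n_3,n_4]$, the additivity relations \eqref{RTLie1}--\eqref{RTLie2} collapse the resulting double sum into $\Phi_2(x)\otimes\Phi_2(y)$. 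Hence $x\in\ker(\Phi_2)$ forces $[x,y]=0\otimes\Phi_2(y)=0$ for every $y$, i.e. $x\in Z(N\otimes N)$. The only step demanding real care is this last identity $[x,y]=\Phi_2(x)\otimes\Phi_2(y)$, where one must track how the generator-wise formula \eqref{RTLie4} interacts with the bilinearity relations; everything else reduces to direct substitution of the braiding axioms.
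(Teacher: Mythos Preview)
Your verification of the three morphism axioms is correct and essentially mirrors the paper's: both check \eqref{XLieH2} via \eqref{BLie1}, \eqref{XLieH1} via \eqref{BLie1} combined with (your Peiffer argument versus the paper's \eqref{BLie4}, which amount to the same thing), and \eqref{BXLieH3} via the identity $[\{n_1,n_2\},\{n_3,n_4\}]=\{[n_1,n_2],[n_3,n_4]\}$ already obtained in Lemma~\ref{braidmorph}.

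For the kernel inclusions your route is genuinely different from the paper's, and cleaner. The paper treats the two inclusions separately and argues on generators of the kernel: it takes $n\otimes n'\in\ker(\Phi_i)$, deduces $[n,n']=0$, and then checks that brackets against generators vanish. Your approach instead collapses both target subobjects to the ordinary center $Z(N\otimes N)$ (immediate, since action and braiding are both the bracket), chains the kernels via $\partial\circ\Phi_1=\Phi_2$, and proves the single inclusion $\ker(\Phi_2)\subset Z(N\otimes N)$ through the global identity $[x,y]=\Phi_2(x)\otimes\Phi_2(y)$. This identity is correct, and the bilinear collapse you describe works because \eqref{RTLie4} forces every iterated bracket of generators to be again a generator, so every element of $N\otimes N$ is a $K$-linear combination of generators---you may want to say this explicitly. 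The payoff of your approach is that it handles arbitrary elements of the kernel directly, whereas the paper's phrasing (``if $n\otimes n'\in\ker(\Phi_1)$ then $\{n,n'\}=0$'') literally only addresses elements that happen to be single generators; your formula $[x,y]=\Phi_2(x)\otimes\Phi_2(y)$ is exactly what is needed to make the paper's computation rigorous for general $x$.
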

	\begin{proof}
		For the proof we will denote the action $[-,-]$ of $N\otimes N\xrightarrow{\Id_{N\otimes N}}N\otimes N$ as $*$ and its braiding as $\lBrack-,-\rBrack$.
		
		First we will show \eqref{XLieH1}. Let $n\otimes n',n''\otimes n'''\in N\otimes N$:
		\begin{align*}
		&\Phi_1((n\otimes n')*(n''\otimes n'''))=\Phi_1([n\otimes n',n''\otimes n'''])=\Phi_1([n,n']\otimes [n'',n'''])\\
		&=\{[n,n'],[n'',n''']\}=\{[n,n'],\partial \{n'',n'''\}\}=[n,n']\cdot \{n'',n'''\}\\
		&=\Phi_2(n\otimes n')\cdot\Phi_1(n''\otimes n'''),
		\end{align*}
		where we use \eqref{BLie1} and \eqref{BLie4}.
		
		Now we will show \eqref{XLieH2}.
		\begin{equation*}
		\partial\circ \Phi_1(n\otimes n')=\partial\{n,n'\}=[n,n']=\Phi_2(\Id_{N\otimes N}(n\otimes n')),
		\end{equation*}
		where we use \eqref{BLie2}.
		
		Now, we will prove \eqref{BXLieH3}.
		\begin{align*}
		&\Phi_1(\lBrack n\otimes n', n''\otimes n'''\rBrack)=\Phi_1([n\otimes n',n''\otimes n'''])=\Phi_1([n,n']\otimes [n'',n'''])\\&=\{[n,n'],[n'',n''']\}=\{\Phi_2(n\otimes n'),\Phi_2(n''\otimes n''')\}.
		\end{align*}
		
		For this, it is proven that $(\Phi_1,\Phi_2)$ is a morphism.

		We will now prove that the inclusions hold.
		
		If $n\otimes n'\in \ker(\Phi_1)$ then $\{n,n'\}=0$. Using \eqref{BLie1} we have that $0=\partial\{n,n'\}=[n,n']$.
		
		Since $(N\otimes N)^{(N\otimes N)}=\{x\in N\otimes N\mid (n''\otimes n''')* x=0, \  n''\otimes n'''\in N\otimes N\}$ (it is enough to work on generators), we have
		\begin{align*}
		(n''\otimes n''')*(n\otimes n')=[n''\otimes n''',n\otimes n']=[n'',n''']\otimes [n,n']=[n'',n''']\otimes 0=0.
		\end{align*}
		With the previous equality we have that $n\otimes n'\in (N\otimes N)^{(N\otimes N)}$ and it is proven that $\ker(\Phi_1)\subset (N\otimes N)^{(N\otimes N)}$.
		
		For the last inclusion, we take $n\otimes n'\in \ker(\Phi_2)$, i.e. $[n,n']=0$.
		
		Since it is enough to work in generators, we have that $$\Z_B(N\otimes N)=\{x\in N\otimes N\mid \lBrack x,n''\otimes n'''\rBrack=0=\lBrack n''\otimes n''',x\rBrack, \   n''\otimes n'''\in N\otimes N \}.$$
		
		Then, since
		\begin{align*}
		&\lBrack n''\otimes n''',n\otimes n' \rBrack=[n''\otimes n''',n\otimes n']=[n'',n''']\otimes [n,n']=[n'',n''']\otimes 0=0,\\
		&\lBrack n\otimes n',n''\otimes n'''\rBrack=[n\otimes n',n''\otimes n''']=[n,n']\otimes [n'',n''']=0\otimes [n'',n''']=0,
		\end{align*}
$n\otimes n'\in Z_B(N\otimes N)$, which proves that $\ker(\Phi_2)\subset Z_B(N\otimes N)$.
	\end{proof}

\begin{cor}\label{extensioniffperfect}
	The morphism given in Lemma~\ref{braidcommumorph} is a central extension if and only if $(M\xrightarrow{\partial}N,\cdot,\{-,-\})$ is a perfect braided crossed module of Lie $K$-algebras.
\end{cor}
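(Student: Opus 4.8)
The plan is to decompose the statement ``$(\Phi_1,\Phi_2)$ is a central extension'' into its two constituent demands and to check each against the preceding lemmas. By the definition of central extension of braided crossed modules, $(\Phi_1,\Phi_2)$ is a central extension precisely when (i) it is an extension, i.e. $\Phi_1$ and $\Phi_2$ are both surjective, and (ii) its kernel lies inside the center, i.e. $\ker(\Phi_1)\subset (N\otimes N)^{(N\otimes N)}$ and $\ker(\Phi_2)\subset Z_B(N\otimes N)$.

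First I would dispose of condition (ii): this is exactly the content of the last two inclusions proved in Lemma~\ref{braidcommumorph}, and crucially those inclusions were established with no hypothesis whatsoever on $(M\xrightarrow{\partial}N,\cdot,\{-,-\})$. Thus the centrality condition holds automatically, and the only possible obstruction to $(\Phi_1,\Phi_2)$ being a central extension is the failure of surjectivity.

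It then remains to translate condition (i). Here I would appeal directly to the second half of Lemma~\ref{braidmorph}, where it was shown that $\im(\Phi_1)=B_N(M)$ and $\im(\Phi_2)=[N,N]$, so that $\Phi_1$ and $\Phi_2$ are simultaneously surjective if and only if $M=B_N(M)$ and $N=[N,N]$, that is, if and only if the braided crossed module is perfect. Chaining the equivalences gives: $(\Phi_1,\Phi_2)$ is a central extension $\Longleftrightarrow$ it is an extension $\Longleftrightarrow$ $\Phi_1,\Phi_2$ are both surjective $\Longleftrightarrow$ the braided crossed module is perfect, which is the assertion. I do not anticipate a genuine obstacle, since the corollary is purely an assembly of Lemmas~\ref{braidmorph} and~\ref{braidcommumorph}; the only point requiring attention is the bookkeeping observation that centrality factors as surjectivity plus the kernel inclusions, so that the already-unconditional inclusions of Lemma~\ref{braidcommumorph} leave perfectness as the sole remaining condition.
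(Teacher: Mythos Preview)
Your proposal is correct and follows essentially the same approach as the paper's proof: both reduce centrality to surjectivity by invoking the unconditional kernel inclusions from Lemma~\ref{braidcommumorph}, and then identify surjectivity with perfectness via Lemma~\ref{braidmorph}. Your write-up is slightly more explicit in unpacking the two conditions, but the logical structure is identical.
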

	\begin{proof}
		It will be a central extension if and only if $(\Phi_1,\Phi_2)$ is an extension, since in Lemma~\ref{braidcommumorph} were proved the two inclusions and they have the restricted operations as braided crossed module.
		
		But $(\Phi_1,\Phi_2)$ is an extension if and only if $\Phi_1$ and $\Phi_2$ are simultaneously surjective, and by Lemma~\ref{braidmorph} that it happen if and only if the braided crossed module $(M\xrightarrow{\partial}N,\cdot, \{-,-\})$ is perfect.
	\end{proof}

\begin{prop}\label{mediadoraUcentralext}
	If $(X_1\xrightarrow{\delta} X_2,*,\lParen-,-\rParen)\xrightarrow{f=(f_1,f_2)}(M\xrightarrow{\partial}N,\cdot,\{-,-\})$ is a central extension, then we have a morphism in the category of braided crossed modules $h^X\colon (N\otimes N\xrightarrow{\Id_{N\otimes N}}N\otimes N,[-,-],[-,-]) \xrightarrow{\ \ }(X_1\xrightarrow{\delta} X_2,*,\lParen-,-\rParen)$ defined by:
	\begin{itemize}
		\item $h^X_1\colon N\otimes N\xrightarrow{}X_1$, $n\otimes \eta\mapsto \lParen \overline{n},\overline{\eta}\rParen$, where $\overline{n},\overline{\eta}\in X_2$ are  elements  such that $f_2(\overline{n})=n$ and $f_2(\overline{\eta})=\eta$.
		\item $h^X_2\colon N\otimes N\xrightarrow{}X_2$, $n\otimes \eta\mapsto [\overline{n},\overline{\eta}]$, where $\overline{n},\overline{\eta}\in X_2$ are elements such that $f_2(\overline{n})=n$ and $f_2(\overline{\eta})=\eta$.
	\end{itemize}

	In addition $f\circ h^X=\Phi$, i.e, it is a morphism between the extensions.
\end{prop}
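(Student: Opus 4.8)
The plan is to first establish that the two assignments do not depend on the chosen lifts $\overline{n},\overline{\eta}$, and I expect this to be the crux of the argument, since it is exactly where the centrality hypothesis is used. Because $f$ is a central extension, its kernel lies inside the center, so $\ker(f_2)\subset Z_B(X_2)$ and $\ker(f_1)\subset X_1^{X_2}$; by the Remark on the braided center, $Z_B(X_2)\subset Z(X_2)\cap\st_{X_2}(X_1)$. If $\overline{n},\overline{n}'$ are two lifts of $n$, then $\overline{n}-\overline{n}'\in\ker(f_2)$; hence $[\overline{n}-\overline{n}',\overline{\eta}]=0$ since $\ker(f_2)\subset Z(X_2)$, giving the independence of $h^X_2$, and $\lParen\overline{n}-\overline{n}',\overline{\eta}\rParen=0$ since $\ker(f_2)\subset Z_B(X_2)$, giving the independence of $h^X_1$. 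The second slot is handled symmetrically, so both maps are well defined on elementary tensors.

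Next I would check that these assignments respect the defining relations \eqref{RTLie1}--\eqref{RTLie4} of $N\otimes N$, so that the universal property of the tensor product yields Lie $K$-homomorphisms $h^X_1\colon N\otimes N\to X_1$ and $h^X_2\colon N\otimes N\to X_2$. Relations \eqref{RTLie1} and \eqref{RTLie2} are immediate from the $K$-bilinearity of $[-,-]$ and $\lParen-,-\rParen$, choosing the lifts additively and scalar-compatibly (possible since $f_2$ is $K$-linear). For \eqref{RTLie3} I would lift $[n_i,n_j]$ by $[\overline{n_i},\overline{n_j}]$ and then apply the Jacobi identity for $h^X_2$ and the braiding identities \eqref{BLie5} and \eqref{BLie6} on $X_2$ for $h^X_1$, reproducing verbatim the computation of Lemma~\ref{braidmorph}. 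For \eqref{RTLie4} the verification for $h^X_2$ is trivial, both sides reducing to $[[\overline{n_1},\overline{n_2}],[\overline{n_3},\overline{n_4}]]$; for $h^X_1$ I would rewrite $[\lParen\overline{n_1},\overline{n_2}\rParen,\lParen\overline{n_3},\overline{n_4}\rParen]=\lParen\delta\lParen\overline{n_1},\overline{n_2}\rParen,\delta\lParen\overline{n_3},\overline{n_4}\rParen\rParen=\lParen[\overline{n_1},\overline{n_2}],[\overline{n_3},\overline{n_4}]\rParen$ using \eqref{BLie2} followed by \eqref{BLie1}, which is precisely $h^X_1$ applied to $[n_1,n_2]\otimes[n_3,n_4]$.

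It then remains to verify that $(h^X_1,h^X_2)$ is a morphism of braided crossed modules and that $f\circ h^X=\Phi$. Property \eqref{XLieH2} follows at once from \eqref{BLie1}, since $\delta\lParen\overline{n},\overline{\eta}\rParen=[\overline{n},\overline{\eta}]=h^X_2(n\otimes\eta)$. For \eqref{XLieH1} and \eqref{BXLieH3} I would use that in the source both the action and the braiding of $(N\otimes N\xrightarrow{\Id_{N\otimes N}}N\otimes N,[-,-],[-,-])$ are the bracket, so that $(n\otimes n')*(n''\otimes n''')$ and $\lBrack n\otimes n',n''\otimes n'''\rBrack$ both equal $[n,n']\otimes[n'',n''']$; then \eqref{BLie4} together with \eqref{BLie1} on $X_1,X_2$ gives \eqref{XLieH1}, while \eqref{BXLieH3} is immediate from the definition of $h^X_1$ and $h^X_2$, exactly paralleling Lemma~\ref{braidcommumorph}. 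Finally $f\circ h^X=\Phi$ is read off directly: since $(f_1,f_2)$ is a morphism of braided crossed modules and $f_2(\overline{n})=n$, $f_2(\overline{\eta})=\eta$, one obtains $f_1(h^X_1(n\otimes\eta))=f_1\lParen\overline{n},\overline{\eta}\rParen=\{n,\eta\}=\Phi_1(n\otimes\eta)$ and $f_2(h^X_2(n\otimes\eta))=f_2[\overline{n},\overline{\eta}]=[n,\eta]=\Phi_2(n\otimes\eta)$. Thus the main obstacle is the well-definedness step, and everything downstream is a direct adaptation of the computations in Lemmas~\ref{braidmorph} and~\ref{braidcommumorph}.
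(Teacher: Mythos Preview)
Your proposal is correct and follows essentially the same approach as the paper: you use $\ker(f_2)\subset Z_B(X_2)\subset Z(X_2)$ for the independence of lifts, then reduce the verification of the tensor relations and of the braided-crossed-module axioms to the computations already carried out in Lemmas~\ref{braidmorph} and~\ref{braidcommumorph}, and finish with the direct check that $f\circ h^X=\Phi$. The paper's proof proceeds identically, explicitly invoking those two lemmas at the same points.
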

	\begin{proof}
		Since $f_1$ and $f_2$ are surjective, we can always take a pair of elements to apply the definition.
		We need to prove that $h^X_1$ and $h^X_2$ are well defined.
		
		We will start with $h^X_1$. We will take $\overline{n},\widetilde{n},\overline{\eta},\widetilde{\eta}\in X_2$ such that $f_2(\overline{n})=f_2(\widetilde{n})=n$ and $f_2(\overline{\eta})=f_2(\widetilde{\eta})=\eta$ and prove that $\lParen \overline{n},\overline{\eta}\rParen=\lParen \widetilde{n},\widetilde{\eta}\rParen$.
		
		Since $f_2(\overline{n})=f_2(\widetilde{n})$ we have that $\overline{n}-\widetilde{n}\in \ker(f_2)$. $f=(f_1,f_2)$ is a central extension, that means, in the braided crossed module of Lie $K$-algebras, that $\ker(f_2)\subset \Z_B(X_2)$. By the definition of $\Z_B(X_2)$ we know that $\lParen\overline{n}-\widetilde{n},\overline{\eta}\rParen=0$ and that means $\lParen\overline{n},\overline{\eta}\rParen=\lParen\widetilde{n},\overline{\eta}\rParen$.
		
		Using the same reasoning we have that $\overline{\eta}-\widetilde{\eta}\in \ker(f_2)\subset \Z_B(X_2)$ and, for that reason, $\lParen\widetilde{n},\overline{\eta}-\widetilde{\eta}\rParen=0$. So $\lParen\widetilde{n},\overline{\eta}\rParen=\lParen\widetilde{n},\widetilde{\eta}\rParen$.
		
		With both equalities, we have that $\lParen\overline{n},\overline{\eta}\rParen=\lParen\widetilde{n},\overline{\eta}\rParen=\lParen\widetilde{n},\widetilde{\eta}\rParen$ and $h^X_1$ is independent of the choice.
		
		Since $\Z_B(X_2)\subset \Z(X_2)$ we can change the proof for $h^X_1$ taking the equalities for $[-,-]$ instead of $\lParen-,-\rParen$ which proof that $h^X_2$ is independent of the choice.
		
		We need to prove that $h_1^X$ and $h_2^X$ preserves the relations. Since $f_2$ is a Lie $K$-homomorphism we have that $f_2(\lambda\overline{n}+\mu\overline{\eta})=f_2(\overline{\lambda n+\mu \eta})=\lambda n+\mu \eta$ with $\lambda,\mu\in K$ and $f_2([\overline{n},\overline{\eta}])=f_2(\overline{[n,\eta]})=[n,\eta]$.
		With the last equalities, and since $h_1^X$ and $h_2^X$ are independent of the choice taken for they definition, we can use the same proof as in Lemma~\ref{braidmorph} to proof that $h^X_1$ and $h^X_2$ are well defined, i.e. they preserve the relations, and, since they are defined on generators, they are Lie $K$-homomorphisms.
	
		We need also to prove that $h^X=(h_1^X,h_2^X)$ is a morphism of braided crossed modules of Lie $K$-algebras.
		
		Using now the same proof as the one done in Lemma~\ref{braidcommumorph}, since we can make the changes in the choice inside the braidings and brackets, we prove that $h^X$ is a morphisms of braided crossed modules of Lie $K$-algebras.
		
		To finish, if $n\otimes \eta\in N\otimes N$, then
		\[f_1\circ h^X_1(n\otimes\eta)=f_1(\lParen\overline{n},\overline{\eta}\rParen)
		=\{f_2(\overline{n}),f_2(\overline{\eta})\}
		=\{n,\eta\}=\Phi_1(n\otimes \eta),\]
		and
		\[f_2\circ h^X_2(n \otimes \eta)=f_2([ \overline{n},\overline{\eta}])=[f_2(\overline{n}),f_2(\overline{\eta})]=[n,\eta]=\Phi_2(n\otimes \eta).\]
		
		That proves that $f\circ h^X_1=\Phi$.
	\end{proof}

\begin{lem}\label{perfecttensor}
	If $N$ is a perfect Lie $K$-algebra, i.e. $N=[N,N]$, then we have that $(N\otimes N\xrightarrow{\Id_{N\otimes N}}N\otimes N,[-,-],[-,-])$ is a perfect braided crossed module of Lie $K$-algebras.
	
	In particular, if $(M\xrightarrow{\partial} N,\cdot,\{-,-\})$ is a perfect braided crossed module, then $(N\otimes N\xrightarrow{\Id_{N\otimes N}}N\otimes N,[-,-],[-,-])$ is a perfect braided crossed module of Lie $K$-algebras.
\end{lem}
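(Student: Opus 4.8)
The plan is to verify directly the two conditions in the definition of a perfect braided crossed module for the object $(N\otimes N\xrightarrow{\Id_{N\otimes N}}N\otimes N,[-,-],[-,-])$. Writing $P=N\otimes N$ for brevity, I must check that $P=[P,P]$ (the condition on the bottom algebra) and that $P=B_P(P)$ (the condition $M=B_N(M)$ on the top algebra). First I would observe that both conditions collapse to a single equality. Indeed, since the braiding of this crossed module is the Lie bracket of $P$, the generating set $\{x\in P\mid x=[p,p'] \text{ for some } p,p'\in P\}$ of $B_P(P)$ is precisely a generating set of the derived subalgebra $[P,P]$, which is already a subalgebra; hence $B_P(P)=[P,P]$ (consistent with the inclusion $[M,M]\subset B_N(M)$ recorded in the earlier remark). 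Therefore the whole first assertion reduces to proving the single equality $N\otimes N=[N\otimes N,N\otimes N]$.

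To establish this, I would show that every generator $n\otimes n'$ of $N\otimes N$ already lies in $[N\otimes N,N\otimes N]$; since these generators span $N\otimes N$ and the derived subalgebra is contained in $N\otimes N$, equality then follows. The key tool is the rewritten form of \eqref{RTLie4} for $N$ acting on itself by the bracket, namely $[\,n_1\otimes n_2,\,n_3\otimes n_4\,]=[n_1,n_2]\otimes[n_3,n_4]$. Using the hypothesis $N=[N,N]$, I would write $n=\sum_i[a_i,b_i]$ and $n'=\sum_j[c_j,d_j]$ as finite sums of brackets, and then, by the bilinearity relation \eqref{RTLie2},
\[
n\otimes n'=\sum_{i,j}[a_i,b_i]\otimes[c_j,d_j]=\sum_{i,j}\big[\,a_i\otimes b_i,\,c_j\otimes d_j\,\big]\in[N\otimes N,N\otimes N].
\]
This gives $N\otimes N\subseteq[N\otimes N,N\otimes N]$, hence equality, which by the first paragraph yields both perfectness conditions at once.

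For the ``in particular'' statement, I would simply note that if $(M\xrightarrow{\partial}N,\cdot,\{-,-\})$ is a perfect braided crossed module then, by definition, $N=[N,N]$, so $N$ is a perfect Lie $K$-algebra and the first part applies verbatim. I expect no real obstacle here: the computation is short, and the only point demanding care is the identification $B_P(P)=[P,P]$, which relies on the braiding of this particular crossed module being the Lie bracket, so that the defining generators of the commutator braided crossed submodule coincide with those of the derived subalgebra.
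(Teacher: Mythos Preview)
Your proposal is correct and follows essentially the same route as the paper: both observe that, since the braiding is the Lie bracket, one has $B_{N\otimes N}(N\otimes N)=[N\otimes N,N\otimes N]$, reduce to proving $N\otimes N=[N\otimes N,N\otimes N]$, and then use \eqref{RTLie4} together with $N=[N,N]$ to place every generator inside the derived subalgebra. Your version is in fact slightly more careful, writing elements of $N$ as finite sums of brackets rather than single brackets, but the argument is the same.
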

	\begin{proof}
	Since the braiding in $(N\otimes N\xrightarrow{\Id_{N\otimes N}}N\otimes N,[-,-],[-,-])$ is the bracket, we have that $[N\otimes N,N\otimes N]=B_{N\otimes N}(N\otimes N)$, so is enough to prove that $[N\otimes N,N\otimes N]=N\otimes N$.
	
	Since $N=[N,N]$ is enough to prove that the generators  $[n_1,n_2]\otimes [n_3,n_4]$ are inside $[N\otimes N,N\otimes N]$. Using \eqref{RTLie4} we have that $[n_1,n_2]\otimes [n_3,n_4]=[n_1\otimes n_2,n_3\otimes n_4]\in [N\otimes N,N\otimes N]$ and the first part of the proof is complete.
	
	For the second part, if $(M\xrightarrow{\partial} N,\cdot,\{-,-\})$ is perfect, then $N=[N,N]$ and we conclude using the first part.
	\end{proof}

\begin{prop}\label{siperfhuni}
	Let $(Y_1\xrightarrow{\varrho} Y_2,\star,\lBrack-,-\rBrack)\xrightarrow{\Psi}(M\xrightarrow{\partial}N, \cdot,\{-,-\})$ be a morphism of braided crossed modules of Lie $K$-algebras such that the braided crossed module $(Y_1\xrightarrow{\varrho} Y_2,\star,\lBrack-,-\rBrack)$ is perfect.
	
	If $(X_1\xrightarrow{\rho} X_2,*,\lParen-,-\rParen)\xrightarrow{f}(M\xrightarrow{\partial}N, \cdot,\{-,-\})$ is a central extension and exists $(Y_1\xrightarrow{\varrho} Y_2,\star,\lBrack-,-\rBrack)\xrightarrow{h}(X_1\xrightarrow{\rho} X_2,*,\lParen-,-\rParen)$ such that $\Psi=f \circ h$, then h is  the unique that satisfy that equality.
\end{prop}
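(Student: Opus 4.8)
The plan is to argue uniqueness in the standard way for lifts against a central extension. I assume two morphisms $h=(h_1,h_2)$ and $h'=(h'_1,h'_2)$ from $(Y_1\xrightarrow{\varrho}Y_2,\star,\lBrack-,-\rBrack)$ to $(X_1\xrightarrow{\rho}X_2,*,\lParen-,-\rParen)$ both satisfy $\Psi=f\circ h=f\circ h'$, and I show $h=h'$. The whole argument rests on two ingredients already available: perfectness of $(Y_1\xrightarrow{\varrho}Y_2,\star,\lBrack-,-\rBrack)$, which says that $Y_2$ is generated as a Lie algebra by the brackets $[y,y']$ and $Y_1=B_{Y_2}(Y_1)$ is generated by the braidings $\lBrack y,y'\rBrack$; and centrality of $f$, which controls the difference of the two lifts.

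First I would record the consequences of centrality. Since $f$ is a central extension, $\ker(f_2)\subset\Z_B(X_2)$, and by the Remark following the definition of the braided center this gives $\ker(f_2)\subset Z(X_2)\cap\st_{X_2}(X_1)$; in particular each element of $\ker(f_2)$ is central in the Lie algebra $X_2$ and annihilates the braiding $\lParen-,-\rParen$ on both sides. From $f_2\circ h_2=f_2\circ h'_2$ I obtain $h_2(y)-h'_2(y)\in\ker(f_2)$ for every $y\in Y_2$. Now I prove $h_2=h'_2$: because $Y_2=[Y_2,Y_2]$ it suffices to match $h_2$ and $h'_2$ on a generator $[y,y']$. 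Writing $h'_2(y)=h_2(y)+k$ and $h'_2(y')=h_2(y')+k'$ with $k,k'\in\ker(f_2)\subset Z(X_2)$ and expanding $[h'_2(y),h'_2(y')]$ by bilinearity, the three terms involving $k$ or $k'$ vanish by centrality, leaving $[h_2(y),h_2(y')]$; as $h_2$ and $h'_2$ are Lie homomorphisms this yields $h_2([y,y'])=h'_2([y,y'])$, and two homomorphisms agreeing on a generating set agree on all of $Y_2$.

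Then $h_1=h'_1$ follows almost immediately. By perfectness $Y_1=B_{Y_2}(Y_1)$ is generated by the braidings $\lBrack y,y'\rBrack$, so it is enough to compare $h_1$ and $h'_1$ on such a generator. Since both $h$ and $h'$ are morphisms of braided crossed modules, \eqref{BXLieH3} gives $h_1(\lBrack y,y'\rBrack)=\lParen h_2(y),h_2(y')\rParen$ and $h'_1(\lBrack y,y'\rBrack)=\lParen h'_2(y),h'_2(y')\rParen$; since $h_2=h'_2$ was just established these coincide, and again the homomorphism property propagates the equality to all of $Y_1$. (One can also bypass first proving $h_2=h'_2$ and argue directly, expanding $\lParen h'_2(y),h'_2(y')\rParen$ and using $\ker(f_2)\subset\Z_B(X_2)$ to kill the extra braiding terms; I prefer routing through $h_2=h'_2$ since it reuses the same bookkeeping.)

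The computations are entirely routine and there is no genuine obstacle: perfectness supplies the generators and centrality supplies the vanishing. The only points demanding care are to invoke the correct inclusion at each step — $\ker(f_2)\subset Z(X_2)$ for the bracket computation and $\ker(f_2)\subset\Z_B(X_2)$ for the braiding computation — and to use that two Lie homomorphisms that agree on a generating set agree on the whole algebra, which is exactly what perfectness of $(Y_1\xrightarrow{\varrho}Y_2,\star,\lBrack-,-\rBrack)$ guarantees for both $Y_1$ and $Y_2$.
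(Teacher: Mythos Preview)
Your proposal is correct and follows essentially the same approach as the paper's proof: both argue that the difference $h_2(y)-h'_2(y)$ lies in $\ker(f_2)\subset\Z_B(X_2)\subset Z(X_2)$, kill the cross terms in $[h'_2(y),h'_2(y')]$ by centrality to obtain $h_2=h'_2$ on $Y_2=[Y_2,Y_2]$, and then use perfectness $Y_1=B_{Y_2}(Y_1)$ for the first component. The only cosmetic difference is that for $h_1=h'_1$ the paper re-expands $\lParen h_2(y),h_2(y')\rParen=\lParen g_2(y)+k_y,g_2(y')+k_{y'}\rParen$ and uses $\ker(f_2)\subset\Z_B(X_2)$ to kill the extra braiding terms directly, whereas you route through the already-established equality $h_2=h'_2$ together with \eqref{BXLieH3}; you explicitly note this alternative, so the two proofs are interchangeable.
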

	\begin{proof}
		Suppose that exists $g,h\colon (Y_1\xrightarrow{\varrho} Y_2,\star,\lBrack-,-\rBrack)\xrightarrow{}(X_1\xrightarrow{\rho} X_2,*,\lParen-,-\rParen)$ such that $\Psi=f \circ h=f \circ g$, i.e., we have that $\Psi_1=f_1 \circ h_1=f_1 \circ g_1$ and $\Psi_2=f_2 \circ h_2=f_2 \circ g_2$.
		
		If $y\in Y_2$ then $f_2 \circ h_2(y)=f_2 \circ g_2(y)$ and for that $f_2(h_2(y)-g_2(y))=0$, what means $h_2(y)-g_2(y)\in \ker(f_2)$. Then exists $k_y\in \ker(f_2)$ such that $h_2(y)=g_2(y)+k_y$. Since $f$ is a central extension we have that $\ker(f_2)\subset \Z_B(X_2)\subset \Z(X_2)$. If we take $y,z\in Y_2$ then, since $k_y,k_z\in \Z(X_2)$ we have \[[k_y,g_2(z)]=[k_y,k_z]=[g_2(y),k_z]=0.\]
		
		Using this, we have:
		\begin{align*}
		&h_2([y,z])=[h_2(y),h_2(z)]=[g_2(y)+k_z,g_2(z)+k_z]\\
		&=[g_2(y),g_2(z)]+[k_y,g_2(z)]+[k_y,k_z]+[g_2(y),k_z]=[g_2(y),g_2(z)]=g_2([y,z]).
		\end{align*}

		Since $(Y_1\xrightarrow{\varrho} Y_2,\star,\lBrack-,-\rBrack)$ is perfect, $Y_2=[Y_2,Y_2]$, which implies, with the previous equality, that $g_2=h_2$.
		
		In addition, since $\ker(f_2)\subset \Z_B(X_2)$, if $y,z\in Y_2$ we have that:
		\begin{align*}
		&h_1(\lBrack y,z\rBrack)=\lParen h_2(y),h_2(z)\rParen=\lParen g_2(y)+k_z,g_2(y)+k_z\rParen\\
		&=\lParen g_2(y),g_2(z)\rParen+\lParen k_y,g_2(z)\rParen +\lParen k_y,k_z\rParen +\lParen g_2(y),k_z\rParen =\lParen g_2(y),g_2(z)\rParen=g_1(\lBrack y,z\rBrack),
		\end{align*}
		where we use that $k_y,k_z\in \Z_B(X_2)$ in $\lParen k_y,g_2(z)\rParen=\lParen k_y,k_z\rParen=\lParen g_2(y),k_z\rParen=0$.
		
		Since $(Y_1\xrightarrow{\varrho} Y_2,\star,\lBrack-,-\rBrack)$ is perfect, $Y_1=B_{Y_2}(Y_1)$, which implies that it is generated by the images of the braiding. With the previous equality, that proves that $g_1=h_1$.
	\end{proof}

\begin{cor}\label{ifperhaveuniv}
	If $(M\xrightarrow{\partial}N,\cdot,\{-,-\})$ is a perfect braided crossed module of Lie $K$-algebras, then $$(N\otimes N\xrightarrow{\Id_{N\otimes N}}N\otimes N,[-,-],[-,-])\xrightarrow{\Phi}(M\xrightarrow{\partial}N,\cdot,\{-,-\})$$ is the universal central extension, where $\Phi_1,\Phi_2$ were defined in Lemma~\ref{braidmorph}.
\end{cor}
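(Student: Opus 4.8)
The plan is to treat this corollary as an assembly of the results already established in this section, since all the genuine work, the well-definedness of $\Phi$, the kernel inclusions, the construction of a mediating morphism, and the uniqueness criterion, has been done in the preceding lemmas and propositions. Concretely, I must verify two things from the definition of universal central extension: first, that $\Phi$ is itself a central extension over $(M\xrightarrow{\partial}N,\cdot,\{-,-\})$, and second, that for every central extension $f\colon(X_1\xrightarrow{\rho}X_2,*,\lParen-,-\rParen)\to(M\xrightarrow{\partial}N,\cdot,\{-,-\})$ there exists a \emph{unique} morphism of braided crossed modules $h^X$ with $f\circ h^X=\Phi$.

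For the first point I would invoke Corollary~\ref{extensioniffperfect} directly: since $(M\xrightarrow{\partial}N,\cdot,\{-,-\})$ is assumed perfect, the morphism $\Phi=(\Phi_1,\Phi_2)$ of Lemma~\ref{braidmorph} is a central extension. For the existence half of universality, I would take an arbitrary central extension $f$ and apply Proposition~\ref{mediadoraUcentralext}, which produces a morphism $h^X\colon(N\otimes N\xrightarrow{\Id_{N\otimes N}}N\otimes N,[-,-],[-,-])\to(X_1\xrightarrow{\rho}X_2,*,\lParen-,-\rParen)$ in the category of braided crossed modules satisfying $f\circ h^X=\Phi$. This settles existence of the factorization with no further computation.

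For uniqueness I would first use Lemma~\ref{perfecttensor}: because $(M\xrightarrow{\partial}N,\cdot,\{-,-\})$ is perfect we have $N=[N,N]$, and hence the domain $(N\otimes N\xrightarrow{\Id_{N\otimes N}}N\otimes N,[-,-],[-,-])$ is a perfect braided crossed module. I then apply Proposition~\ref{siperfhuni} with the perfect object $(Y_1\xrightarrow{\varrho}Y_2,\star,\lBrack-,-\rBrack)$ taken to be $(N\otimes N\xrightarrow{\Id_{N\otimes N}}N\otimes N,[-,-],[-,-])$, with $\Psi=\Phi$, and with $h=h^X$ the morphism just produced; this proposition gives that $h^X$ is the only morphism making $\Phi=f\circ h^X$ commute. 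Combining the two halves shows $\Phi$ is initial among central extensions over $(M\xrightarrow{\partial}N,\cdot,\{-,-\})$, i.e. the universal central extension.

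The step requiring the most care is matching the hypotheses when invoking Proposition~\ref{siperfhuni}: one must be sure that $\Phi$ legitimately plays the role of the map $\Psi$ out of a perfect braided crossed module, which is exactly what Lemma~\ref{perfecttensor} guarantees, and that the $h^X$ from Proposition~\ref{mediadoraUcentralext} is a valid choice of the map $h$ whose uniqueness is then asserted. I expect no computational obstacle beyond this bookkeeping, since perfectness of the source is precisely the ingredient that both produces the extension (via Corollary~\ref{extensioniffperfect}) and forces uniqueness (via Proposition~\ref{siperfhuni}).
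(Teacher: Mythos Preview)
Your proposal is correct and follows essentially the same route as the paper: invoke Corollary~\ref{extensioniffperfect} to get that $\Phi$ is a central extension, Proposition~\ref{mediadoraUcentralext} for existence of $h^X$, Lemma~\ref{perfecttensor} for perfectness of the tensor object, and Proposition~\ref{siperfhuni} for uniqueness. Your careful matching of hypotheses when applying Proposition~\ref{siperfhuni} is exactly what the paper does, only slightly more explicitly stated.
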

	\begin{proof}
		Since $(M\xrightarrow{\partial}N,\cdot,\{-,-\})$ is perfect, Corollary~\ref{extensioniffperfect} says that the morphism $(N\otimes N\xrightarrow{\Id_{N\otimes N}}N\otimes N,[-,-],[-,-])\xrightarrow{\Phi}(M\xrightarrow{\partial}N,\cdot,\{-,-\})$ is an extension.
		
		We need to prove that it is universal.
		
		Starting with existence of the morphism, if we have another central extension $(X_1\xrightarrow{\delta}X_2,*,\lParen-,-\rParen)\xrightarrow{f}(M\xrightarrow{\partial}N,\cdot,\{-,-\})$ then using Proposition~\ref{mediadoraUcentralext} we have $h^X$ such that $\Phi=f\circ h^X$.
		
		The uniqueness of this morphism is given by Proposition~\ref{siperfhuni}. We can use the previous proposition since $(N\otimes N\xrightarrow{\Id_{N\otimes N}}N\otimes N,[-,-],[-,-])$ is perfect by the use of Lemma~\ref{perfecttensor} and the fact that $(M\xrightarrow{\partial}N,\cdot,\{-,-\})$ is perfect.
	\end{proof}

In Corollary~\ref{ifperhaveuniv} we prove that if $(M\xrightarrow{\partial}N,\cdot,\{-,-\})$ is perfect, then it has an universal central extension. Lets see now the other implication.

\begin{prop}\label{experentoper}
	Let $(Y_1\xrightarrow{\varrho} Y_2,\star,\lBrack-,-\rBrack)\xrightarrow{\ \Psi \ }(M\xrightarrow{\partial}N, \cdot,\{-,-\})$ be an extension of braided crossed modules of Lie $K$-algebras such that the braided crossed module $(Y_1\xrightarrow{\varrho} Y_2,\star,\lBrack-,-\rBrack)$ is perfect.
	
	Then $(M\xrightarrow{\partial}N,\cdot,\{-,-\})$ is perfect.
\end{prop}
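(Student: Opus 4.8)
The plan is to transport the two defining equalities of perfection for the source, namely $Y_2=[Y_2,Y_2]$ and $Y_1=B_{Y_2}(Y_1)$, forward along the component surjections $\Psi_2$ and $\Psi_1$ (which are surjective precisely because $\Psi$ is an extension of braided crossed modules), using the elementary fact that a surjective Lie $K$-homomorphism carries a generated subalgebra onto the subalgebra generated by the images of its generators.

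First I would settle the bottom component. Since $\Psi_2\colon Y_2\to N$ is a surjective Lie $K$-homomorphism it satisfies $\Psi_2([y,y'])=[\Psi_2(y),\Psi_2(y')]$, so it sends the generating brackets of $[Y_2,Y_2]$ to the generating brackets of $[N,N]$; surjectivity of $\Psi_2$ guarantees that every generator $[n,n']$ of $[N,N]$ is hit. Hence $N=\Psi_2(Y_2)=\Psi_2([Y_2,Y_2])=[N,N]$, where the middle equality uses the hypothesis $Y_2=[Y_2,Y_2]$. This is the second perfection condition for the target.

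For the top component I would invoke the morphism axiom \eqref{BXLieH3}, which reads $\Psi_1(\lBrack y,y'\rBrack)=\{\Psi_2(y),\Psi_2(y')\}$. Thus $\Psi_1$ sends the braiding generators of $B_{Y_2}(Y_1)$ to the braiding generators $\{n,n'\}$ of $B_N(M)$, and again surjectivity of $\Psi_2$ ensures that all pairs $(n,n')\in N\times N$ arise this way. As $\Psi_1$ is a surjective Lie $K$-homomorphism, it carries the generated subalgebra $B_{Y_2}(Y_1)$ onto the subalgebra of $M$ generated by these images, which is exactly $B_N(M)$. Therefore $M=\Psi_1(Y_1)=\Psi_1(B_{Y_2}(Y_1))=B_N(M)$, where the middle equality uses $Y_1=B_{Y_2}(Y_1)$. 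Combined with $N=[N,N]$, this shows that $(M\xrightarrow{\partial}N,\cdot,\{-,-\})$ is perfect.

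The only point deserving care — and the one I would state explicitly — is the auxiliary fact that a surjective Lie $K$-homomorphism $\phi\colon A\to B$ satisfies $\phi(\langle S\rangle_A)=\langle \phi(S)\rangle_B$ for any subset $S$: in general $\phi(\langle S\rangle_A)$ is only a subalgebra of $B$, and one genuinely needs $\phi(A)=B$ to conclude that the image of a generating set of $A$ already generates all of $B$. Everything else is a direct transport of generators across $\Psi_1$ and $\Psi_2$ through the morphism identities, so I expect no real obstacle beyond this bookkeeping.
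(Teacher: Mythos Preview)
Your proposal is correct and follows essentially the same argument as the paper: both proofs use the surjectivity of $\Psi_1,\Psi_2$ together with the morphism identity \eqref{BXLieH3} to carry the braiding generators of $B_{Y_2}(Y_1)$ onto the braiding generators of $B_N(M)$, and the Lie-homomorphism property of $\Psi_2$ to carry brackets onto brackets. The only minor remark is that the auxiliary identity $\phi(\langle S\rangle_A)=\langle\phi(S)\rangle_B$ actually holds for any Lie $K$-homomorphism $\phi$ (the preimage $\phi^{-1}(\langle\phi(S)\rangle_B)$ is a subalgebra containing $S$); where surjectivity is genuinely used is in the equalities $M=\Psi_1(Y_1)$, $N=\Psi_2(Y_2)$ and in ensuring that $\{\Psi_2(y),\Psi_2(y')\}$ ranges over all generators $\{n,n'\}$ of $B_N(M)$.
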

	\begin{proof}
		Since $\Psi$ is an extension, $\Psi_1$ and $\Psi_2$ are surjective maps. Using this fact, we know that $\im(\Psi_1)=M$ and $\im(\Psi_2)=N$.
		
		$(Y_1\xrightarrow{\varrho} Y_2,\star,\lBrack-,-\rBrack)$ is perfect, that means, $Y_1=B_{Y_2}(Y_1)$ and $Y_2=[Y_2,Y_2]$.
		
		Since the elements $\lBrack y,z\rBrack$, with $y,z\in Y_2$ are the generators of $Y_1$, we know that $\Psi(\lBrack y,z\rBrack)$ are the generators of $\im(\Psi_1)=M$. Since $\Psi_1(\lBrack y,z\rBrack)=\{\Phi_2(y),\Phi_2(z)\}$, which means that the generators of $M$ are braid elements and $M\subset B_N(M)$. With this we have $M=B_N(M)$.
		
		For $N$ we know that elements $[y,z]$, with $y,z\in Y_2$, are the generators of $Y_2$. Using this, we know that $\Phi_2([y,z])=[\Phi_2(y),\Phi_2(z)]$ are the generators of $\im(\Phi_2)=N$, which means that $N\subset[N,N]$, and then $N=[N,N]$.
		
		Then we have that $(M\xrightarrow{\partial}N\cdot,\{-,-\})$ is perfect.
	\end{proof}

\begin{lem}\label{ifnotpeefthen2}
	Let $(Y_1\xrightarrow{\varrho} Y_2,\star,\lBrack-,-\rBrack)\xrightarrow{ \ \Psi \ }(M\xrightarrow{\partial}N, \cdot,\{-,-\})$ be a central extension of braided crossed modules of Lie $K$-algebras such that the braided crossed module $(Y_1\xrightarrow{\varrho} Y_2,\star,\lBrack-,-\rBrack)$ is not perfect.
	
	Then exists another extension $(X_1\xrightarrow{\delta} X_2,*,\lParen-,-\rParen)\xrightarrow{\ f \ }(M\xrightarrow{\partial}N, \cdot,\{-,-\})$ and two different morphisms $h,g\colon(Y_1\xrightarrow{\varrho} Y_2,\star,\lBrack-,-\rBrack)\xrightarrow{}(X_1\xrightarrow{\delta} X_2,*,\lParen-,-\rParen)$ such that $\Psi=f\circ h=f\circ g$.
\end{lem}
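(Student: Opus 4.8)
The plan is to exploit the failure of perfectness by attaching to $(Y_1\xrightarrow{\varrho}Y_2,\star,\lBrack-,-\rBrack)$ its \emph{abelianization} and using it to produce two distinct lifts of $\Psi$. First I would build the abelian braided crossed module
\[
A=\left(Y_1/B_{Y_2}(Y_1)\xrightarrow{\ \overline{\varrho}\ }Y_2/[Y_2,Y_2],\,0,\,0\right),
\]
with zero action and zero braiding. This is well defined: the inclusions $[Y_1,Y_1]\subset D_{Y_2}(Y_1)\subset B_{Y_2}(Y_1)$ ensure that $Y_1/B_{Y_2}(Y_1)$ is an abelian Lie algebra, and $\varrho(\lBrack y,y'\rBrack)=[y,y']$ (property \eqref{BLie1}) gives $\varrho(B_{Y_2}(Y_1))\subset[Y_2,Y_2]$, so $\varrho$ descends to $\overline{\varrho}$. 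The axioms \eqref{BLie1}--\eqref{BLie6} then hold trivially since both the action and the braiding are zero and both Lie brackets are zero. The canonical projection $\theta=(\theta_1,\theta_2)\colon(Y_1\xrightarrow{\varrho}Y_2,\star,\lBrack-,-\rBrack)\longrightarrow A$ is a morphism of braided crossed modules: \eqref{XLieH1} and \eqref{BXLieH3} hold because $y_2\star y_1\in D_{Y_2}(Y_1)$ and $\lBrack y,y'\rBrack\in B_{Y_2}(Y_1)$ both lie in $\ker\theta_1$, while \eqref{XLieH2} holds by construction of $\overline{\varrho}$. Crucially, since the crossed module is \emph{not} perfect we have $B_{Y_2}(Y_1)\neq Y_1$ or $[Y_2,Y_2]\neq Y_2$, so $\theta\neq 0$.

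Next I would take $X=(X_1\xrightarrow{\delta}X_2,*,\lParen-,-\rParen)$ to be the product $(Y_1\xrightarrow{\varrho}Y_2,\star,\lBrack-,-\rBrack)\times A$ in the category of braided crossed modules, so that $X_1=Y_1\oplus A_1$ and $X_2=Y_2\oplus A_2$ with all operations componentwise, and set $f=\Psi\circ\pi$, where $\pi\colon X\to(Y_1\xrightarrow{\varrho}Y_2,\star,\lBrack-,-\rBrack)$ is the first projection. Then $f$ is surjective because $\pi$ and $\Psi$ are. The point to check is that $f$ is \emph{central}. Because $A$ has zero action and zero braiding, a direct computation on the product gives $\Z_B(X_2)=\Z_B(Y_2)\oplus A_2$ and $(X_1)^{X_2}=(Y_1)^{Y_2}\oplus A_1$. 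Since $\ker(f_2)=\ker(\Psi_2)\oplus A_2$ and $\ker(f_1)=\ker(\Psi_1)\oplus A_1$, centrality of $\Psi$ (i.e.\ $\ker(\Psi_2)\subset\Z_B(Y_2)$ and $\ker(\Psi_1)\subset(Y_1)^{Y_2}$) immediately yields $\ker(f_2)\subset\Z_B(X_2)$ and $\ker(f_1)\subset(X_1)^{X_2}$; hence $\ker(f)$ sits inside the center of $X$ and $f$ is a central extension.

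Finally I would define the two lifts by the universal property of the product: $h=(\Id,0)$ and $g=(\Id,\theta)$, that is, $h_i(y)=(y,0)$ and $g_i(y)=(y,\theta_i(y))$ for $i=1,2$. Both are morphisms of braided crossed modules (their components $\Id$ and $0$, respectively $\Id$ and $\theta$, are morphisms), and $f\circ h=\Psi\circ\pi\circ h=\Psi=\Psi\circ\pi\circ g=f\circ g$ because $\pi\circ h=\pi\circ g=\Id$. Since $\theta\neq 0$, there is some $y$ with $\theta(y)\neq 0$, whence $h(y)=(y,0)\neq(y,\theta(y))=g(y)$ and $h\neq g$, as required. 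The main obstacle is the bookkeeping in the first step: verifying that $A$ is a genuine braided crossed module and that $\theta$ is a morphism, which is exactly where the inclusions $[Y_1,Y_1]\subset D_{Y_2}(Y_1)\subset B_{Y_2}(Y_1)$ and $\varrho(B_{Y_2}(Y_1))\subset[Y_2,Y_2]$ are used; the centrality computation for $f$ is then routine once $A$ is known to be a zero-action, zero-braiding object.
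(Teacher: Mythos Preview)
Your argument is correct and follows the same strategy as the paper: pass to the abelianization $A=\bigl(Y_1/B_{Y_2}(Y_1)\to Y_2/[Y_2,Y_2]\bigr)$, take a product to manufacture a central extension of $(M\xrightarrow{\partial}N,\cdot,\{-,-\})$, and use the universal property of the product to produce the two distinct lifts $(\,\cdot\,,0)$ and $(\,\cdot\,,\theta)$.

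The one noteworthy difference is in the choice of the first factor of the product. The paper sets $X=(M\xrightarrow{\partial}N,\cdot,\{-,-\})\times A$ and lets $f$ be the first projection $\pi^1$; the two lifts are then $h=(\Psi,0)$ and $g=(\Psi,i^c)$. Because $\ker(\pi^1)=0\times A$, centrality of $f$ follows purely from $A$ having zero action and zero braiding, and the hypothesis that $\Psi$ is central is never actually used. In your version you take $X=(Y_1\xrightarrow{\varrho}Y_2,\star,\lBrack-,-\rBrack)\times A$ and $f=\Psi\circ\pi$, so $\ker(f)=\ker(\Psi)\oplus A$; to get centrality you must now invoke the centrality of $\Psi$, which you do correctly via the direct-sum decompositions $\Z_B(X_2)=\Z_B(Y_2)\oplus A_2$ and $(X_1)^{X_2}=(Y_1)^{Y_2}\oplus A_1$. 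Both routes are fine; the paper's is slightly leaner in that it builds a smaller $X$ and avoids the extra hypothesis, while yours makes the two lifts a shade more transparent since their first component is simply the identity.
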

	\begin{proof}
		Let $(B_{Y_2}(Y_1)\xrightarrow{\varrho|_{B_{Y_2}(Y_1)}}[Y_2,Y_2],\star_C,\lBrack-,-\rBrack_C)\xrightarrow{i=(i_1,i_2)}(Y_1\xrightarrow{\varrho}Y_2,\star,\lBrack-,-\rBrack)$ be the inclusion morphism of the commutator braided crossed submodule.
		
		If we take the cokernel of $i$ we have the crossed module of Lie $K$-algebras $(\frac{Y_1}{B_{Y_2}(Y_1)}\xrightarrow{\overline{\varrho}}\frac{Y_2}{[Y_2,Y_2]},\overline{\star},\lBrack-,-\rBrack)$. We will abuse of notation and we will denote in the same way the braiding and the braiding in the quotient. We will denote the elements in $\frac{Y_1}{B_{Y_2}(Y_1)}$ as $ \overline{x}$, $x\in Y_1$, and the ones in $\frac{Y_2}{[Y_2,Y_2]}$ as $\overline{y}$, $y\in Y_2$.
		
		We take now the product in the category of braided crossed modules and we build $(M\xrightarrow{\partial}N,\cdot,\{-,-\})\times (\frac{Y_1}{B_{Y_2}(Y_1)}\xrightarrow{\overline{\varrho}}\frac{Y_2}{[Y_2,Y_2]},\overline{\star},\lBrack-,-\rBrack)$. We denote as $\pi^1$ the first projection morphism.
		 Since $\pi^1_1$ and $\pi_2^1$ are surjective maps, we have that $$(M\xrightarrow{\partial}N,\cdot,\{-,-\})\times (\frac{Y_1}{B_{Y_2}(Y_1)}\xrightarrow{\overline{\varrho}}\frac{Y_2}{[Y_2,Y_2]},\overline{\star},\lBrack-,-\rBrack)\xrightarrow{\pi^1}(M\xrightarrow{\partial}N,\cdot,\{-,-\})$$ is an extension. We will denote the braiding in the product as $\lBrace-,-\rBrace$.
		
		We will prove that it is a central extension, i.e., we need to prove the inclusions $\ker(\pi^1_1)\subset (M\times \frac{Y_1}{B_{Y_2}(Y_2)})^{(N\times \frac{Y_2}{[Y_2,Y_2]})}$ and $\ker(\pi_2^1)\subset \Z_B(N\times \frac{Y_2}{[Y_2,Y_2]})$.
		
		If $a\in\ker(\pi_1^1)$ then $a=(0,\overline{x})$ with $x\in Y_1$.
		
		If we take $(n,\overline{y})\in N\times \frac{Y_2}{[Y_2,Y_2]}$ then:
		\begin{equation*}
		(n,\overline{y})(\cdot\times\overline{\star})(0,\overline{x})=(n\cdot 0,\overline{y}\overline{\star}\overline{x})=(0,\overline{y\star x}).
		\end{equation*}
		But $\overline{y\star x}=\overline{0}$ since $y\star x\in D_{Y_2}(Y_1)\subset B_{Y_2}(Y_1)$.

\,
		
		So $\ker(\pi^1_1)\subset \big(M\times \frac{Y_1}{B_{Y_2}(Y_2)}\big)^{\big(N\times \frac{Y_2}{[Y_2,Y_2]}\big)}$.
		
		If $a\in\ker(\pi_2^1)$ then $a=(0,\overline{y})$ with $y\in Y_2$. If we take $(n,\overline{y_1})\in N\times \frac{Y_2}{[Y_2,Y_2]}$ then:
		\begin{align*}
		&\lBrace(0,\overline{y}),(n,\overline{y_1})\rBrace=(\{0,n\},\lBrack\overline{y},\overline{y_1}\rBrack)=(0,\overline{\lBrack y,y_1\rBrack}),\\
		&\lBrace(n,\overline{y_1}),(0,\overline{y})\rBrace=(\{n,0\},\lBrack\overline{y_1},\overline{y}\rBrack)=(0,\overline{\lBrack y_1,y\rBrack}).
		\end{align*}
		But $\overline{\lBrack y,y_1\rBrack}=\overline{\lBrack y_1,y\rBrack}=\overline{0}$ since $\lBrack y_1,y\rBrack,\lBrack y,y_1\rBrack\in B_{Y_2}(Y_1)$. Therefore $\ker(\pi_2^1)\subset \Z_B(N\times \frac{Y_2}{[Y_2,Y_2]})$, and $\pi^1$ is a central extension.
		
		If we denote as $i^c\colon (Y_1\xrightarrow{\varrho}Y_2,\star,\lBrack-,-\rBrack)\xrightarrow{}(\frac{Y_1}{B_{Y_2}(Y_1)}\xrightarrow{\overline{\varrho}}\frac{Y_2}{[Y_2,Y_2]},\overline{\star},\lBrack-,-\rBrack)$ the cokernel map of $i$
		we have two morphisms, induced by the product, with domain $(Y_1\xrightarrow{\rho} Y_2,\star,\lBrack-,-\rBrack)$ and  $(M\xrightarrow{\partial}N,\cdot,\{-,-\})\times (\frac{Y_1}{B_{Y_2}(Y_1)}\xrightarrow{\overline{\rho}}\frac{Y_2}{[Y_2,Y_2]},\overline{\star},\lBrack-,-\rBrack)$ as codomain. They are $h=(\Psi, 0)$ and $g=(\Psi,i^c)$. Since they are the induced by the universal property of the product, is immediate that $\Psi=\pi^1\circ h=\pi^1\circ g$.
		
		To finish the proof we only have to prove that they are different. Since the braided crossed module $(Y_1\xrightarrow{\rho}Y_2,\star,\lBrack-,-\rBrack)$ is not perfect and $i^c_1$ and $i^c_2$ are surjective we know that $i^c_1\neq 0$ or $i^c_2\neq 0$ (if both were the zero morphism then $(Y_1\xrightarrow{\rho}Y_2,\star,\lBrack-,-\rBrack)$ would be perfect) and, for that $h\neq g$.
	\end{proof}

\begin{cor}\label{ifnotperthennotuniv}
	If  $(M\xrightarrow{\partial}N,\cdot,\{-,-\})$ is a braided crossed module, then its universal central extension, if exists, is perfect.
\end{cor}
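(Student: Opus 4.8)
The plan is to argue by contradiction, with Lemma~\ref{ifnotpeefthen2} doing essentially all the work. Suppose $\mathcal{U}\xrightarrow{u}\mathcal{X}$ is the universal central extension of $\mathcal{X}=(M\xrightarrow{\partial}N,\cdot,\{-,-\})$ and assume, towards a contradiction, that $\mathcal{U}$ is \emph{not} perfect. Since $u$ is in particular a central extension whose domain $\mathcal{U}$ is not perfect, the hypotheses of Lemma~\ref{ifnotpeefthen2} are satisfied with $\Psi=u$.

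Applying that lemma I obtain another extension $f\colon\mathcal{Z}\to\mathcal{X}$ together with two \emph{distinct} morphisms $h,g\colon\mathcal{U}\to\mathcal{Z}$ such that $u=f\circ h=f\circ g$. Before invoking universality I would verify that $f$ is genuinely a \emph{central} extension and not merely an extension, since the defining property of $\mathcal{U}$ only quantifies over central extensions; this is exactly what the proof of Lemma~\ref{ifnotpeefthen2} establishes, where $f$ is realized as the product projection $\pi^1$ and the two inclusions $\ker(\pi^1_1)\subset(M\times\dots)^{(N\times\dots)}$ and $\ker(\pi^1_2)\subset\Z_B(N\times\dots)$ are checked.

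With $f$ confirmed central, the universal property of $\mathcal{U}$ forces the morphism $\mathcal{U}\to\mathcal{Z}$ compatible with the projections onto $\mathcal{X}$ to be unique; but $h$ and $g$ are two different such morphisms, which is the desired contradiction. Hence $\mathcal{U}$ must be perfect. I expect no serious obstacle here: the entire content is packaged into Lemma~\ref{ifnotpeefthen2}, and the only delicate point is the centrality check just mentioned, after which the clash with the uniqueness clause in the definition of universal central extension is immediate.
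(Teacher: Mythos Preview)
Your argument is correct and matches the paper's own proof essentially line for line: assume the universal central extension is not perfect, invoke Lemma~\ref{ifnotpeefthen2} to produce a central extension admitting two distinct factorizations of $u$, and contradict uniqueness. Your extra care in noting that the lemma's statement only says ``extension'' while its proof actually establishes centrality of $\pi^1$ is a worthwhile observation, since the corollary does rely on that stronger conclusion.
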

	\begin{proof}
		If the universal extension is not perfect, then using Lemma~\ref{ifnotpeefthen2} we have another central extension such that we have two different morphism from the universal central extension satisfying  the commutative diagram of its definition, which enter in contradiction with the universality.
	\end{proof}

\begin{thm}
	A braided crossed module of Lie $K$-algebras has universal central extension if and only if it is perfect.
\end{thm}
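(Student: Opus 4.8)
The plan is to assemble the two implications directly from results already proved, so that this final theorem is essentially a bookkeeping step combining Corollary~\ref{ifperhaveuniv}, Corollary~\ref{ifnotperthennotuniv}, and Proposition~\ref{experentoper}. No new computation should be needed.

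For the direction \emph{perfect $\Rightarrow$ has a universal central extension}, I would simply invoke Corollary~\ref{ifperhaveuniv}: assuming $(M\xrightarrow{\partial}N,\cdot,\{-,-\})$ is perfect, that corollary exhibits
\[
(N\otimes N\xrightarrow{\Id_{N\otimes N}}N\otimes N,[-,-],[-,-])\xrightarrow{\Phi}(M\xrightarrow{\partial}N,\cdot,\{-,-\})
\]
as an explicit universal central extension, so existence is immediate.

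For the converse \emph{has a universal central extension $\Rightarrow$ perfect}, I would argue in two steps. Suppose $\mathcal{U}\xrightarrow{u}\mathcal{X}$ is the universal central extension of $\mathcal{X}=(M\xrightarrow{\partial}N,\cdot,\{-,-\})$. First, Corollary~\ref{ifnotperthennotuniv} tells us that the universal central extension $\mathcal{U}$ is itself a perfect braided crossed module. Second, a central extension is in particular an extension, so $u\colon\mathcal{U}\to\mathcal{X}$ is an extension whose source $\mathcal{U}$ is perfect; Proposition~\ref{experentoper} then forces the target $\mathcal{X}$ to be perfect. This completes the equivalence.

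Since every step is a direct citation of a prior result, I do not anticipate a genuine obstacle here; the only point requiring a word of care is matching hypotheses, namely noting explicitly that a central extension qualifies as an ``extension'' in the sense required by Proposition~\ref{experentoper}, which is immediate from the definitions. The conceptual work—constructing the candidate universal object via $N\otimes N$, verifying universality on perfect objects, and producing the pair of distinct lifts in the non-perfect case—has already been carried out in the preceding lemmas and propositions, so the theorem itself is purely a synthesis of those facts.
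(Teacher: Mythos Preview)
Your proposal is correct and follows exactly the same route as the paper's own proof: invoke Corollary~\ref{ifperhaveuniv} for the forward direction, and for the converse combine Corollary~\ref{ifnotperthennotuniv} with Proposition~\ref{experentoper}, noting that a central extension is in particular an extension. Nothing needs to be added or changed.
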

	\begin{proof}
		If the braided crossed module is perfect then using Corollary~\ref{ifperhaveuniv} we can build its universal central extension.
		
		If the braided crossed module has universal central extension, then using Corollary~\ref{ifnotperthennotuniv} we have that the universal central extension is perfect. Since it is a extension, we can use Lemma~\ref{experentoper} and conclude that the braided crossed module is perfect.
	\end{proof}

\section{Braiding on universal extension of crossed modules of Lie $K$-algebras}

In \cite{Fuk} did not talk about a universal central extension of braided crossed modules of groups. He takes the result of Norrie (\cite{Norr}) about crossed modules of groups an he built a canonical braiding on the universal central extension of crossed modules of groups when the crossed module of which he make the extension is braided too and proved that it was universal in a sense we will explain in this section.

In this part of the paper we will do the same, we will consider braided crossed modules extensions, but as he, we will try to built a braiding on the universal central extension of a braided crossed module though as crossed module, which we will call compatible central extensions. In this sense we will replicate part of the results given by Fukushi in \cite{Fuk}.

Casas and Ladra in \cite{PCMLie} prove that the universal central extension of a perfect crossed module of Lie $K$-algebras $(M\xrightarrow{}N,\cdot)$ is given by:
\begin{equation*}
(N\otimes M\xrightarrow{\Id_N\otimes \partial}N\otimes N,*)\xrightarrow{c=(c_1,c_2)}(M\xrightarrow{\partial}N,\cdot),
\end{equation*}
where $N\otimes M$ is given by the actions $\cdot$ of $N$ on $M$ and $m\star n=[\partial(m),n]$ of $M$ on $N$; the action is given by $(n\otimes n')*(n''\otimes m)=[[n,n'],n'']\otimes m+n''\otimes [n,n']\cdot m$ for $n,n',n''\in N, m\in M$; and the morphisms are $c_1(n\otimes m)=n\cdot m$ and $c_2(n\otimes n')=[n,n']$.

With this in mind we have:
\begin{prop}\label{braidunivxmod}
	If $(M\xrightarrow{\partial \ }N,\cdot, \{-,-\})$ is a braided crossed module of Lie $K$-algebras then $\lBrace -,- \rBrace \colon(N\otimes N)\times (N\otimes N)\xrightarrow{}N\otimes M$ defined over generators as $\lBrace n\otimes n',n''\otimes n'''\rBrace=[n,n']\otimes\{n'',n'''\}$ is a braiding for the crossed module of Lie $K$-algebras $(N\otimes M\xrightarrow{\Id_N\otimes \partial}N\otimes N,*)$.
\end{prop}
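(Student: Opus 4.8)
The plan is to recognise the proposed braiding as a composite of the two morphisms of Lemma~\ref{braidmorph}. Writing $\Phi_1\colon N\otimes N\to M$, $n\otimes n'\mapsto\{n,n'\}$ and $\Phi_2\colon N\otimes N\to N$, $n\otimes n'\mapsto[n,n']$, and abbreviating $\partial'=\Id_N\otimes\partial$, the formula in the statement reads
\[\lBrace\xi,\eta\rBrace=\Phi_2(\xi)\otimes\Phi_1(\eta),\qquad \xi,\eta\in N\otimes N.\]
Since $\Phi_1$ and $\Phi_2$ are well-defined $K$-linear maps by Lemma~\ref{braidmorph}, and the canonical map $(x,y)\mapsto x\otimes y$ is $K$-bilinear on $N\times M$, this presentation immediately exhibits $\lBrace-,-\rBrace$ as a well-defined $K$-bilinear map, so no separate verification of the relations of $N\otimes N$ in the two slots is needed. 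It then remains to check the six axioms \eqref{BLie1}--\eqref{BLie6} for the crossed module $(N\otimes M\xrightarrow{\Id_N\otimes\partial}N\otimes N,*)$; as each is multilinear, it suffices to test them on generators $n\otimes m$ of $N\otimes M$ and $n\otimes n'$ of $N\otimes N$.

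Before computing I would rewrite the relations \eqref{RTLie3} of $N\otimes M$ coming from its two actions ($N$ on $M$ by $\cdot$, and $M$ on $N$ by $m\star n=[\partial m,n]$) as
\begin{align*}
[n_1,n_2]\otimes m&=n_1\otimes(n_2\cdot m)-n_2\otimes(n_1\cdot m),\\
n\otimes[m,m']&=[\partial m',n]\otimes m-[\partial m,n]\otimes m',
\end{align*}
and keep the rewritten \eqref{RTLie4}, namely $[n_1\otimes m_1,n_2\otimes m_2]=-[\partial m_1,n_1]\otimes(n_2\cdot m_2)$. With these at hand, four of the axioms are quick. For \eqref{BLie1} one has $\partial'\lBrace\xi,\eta\rBrace=\Phi_2(\xi)\otimes\partial\Phi_1(\eta)=\Phi_2(\xi)\otimes\Phi_2(\eta)$ using $\partial\Phi_1=\Phi_2$, which on generators is exactly \eqref{RTLie4} read inside $N\otimes N$, hence $[\xi,\eta]$. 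Axiom \eqref{BLie2} substitutes $\{n,\partial m\}=n\cdot m$ from the original \eqref{BLie4} and reduces to the rewritten \eqref{RTLie4}, while the new \eqref{BLie4} is immediate from the first rewritten relation. For \eqref{BLie5} and \eqref{BLie6} I would use that $\Phi_1,\Phi_2$ are Lie homomorphisms to pull the outer bracket inside; writing $x=\Phi_2(\xi)$, $a=\Phi_2(\eta)=\partial\Phi_1(\eta)$, $b=\Phi_2(\zeta)=\partial\Phi_1(\zeta)$ and $\alpha=\Phi_1(\eta)$, $\beta=\Phi_1(\zeta)$, $\chi=\Phi_1(\xi)$, identity \eqref{BLie5} becomes $x\otimes[\alpha,\beta]=[x,a]\otimes\beta-[x,b]\otimes\alpha$, which is the second rewritten relation up to antisymmetry, whereas \eqref{BLie6} becomes $[x,a]\otimes\beta=x\otimes[\alpha,\beta]-a\otimes[\chi,\beta]$ and follows from the Peiffer identity ($[\alpha,\beta]=a\cdot\beta$, $[\chi,\beta]=x\cdot\beta$) together with the first rewritten relation.

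The main obstacle is \eqref{BLie3}, $\lBrace\partial'(a),\eta\rBrace=-\eta*a$, where the asymmetry with \eqref{BLie4} forces the relations to be combined rather than applied once. Taking $a=n\otimes m$ and $\eta=p\otimes q$, the left side is $[n,\partial m]\otimes\{p,q\}$, while, writing $\mu=\{p,q\}$ so that $\partial\mu=[p,q]$, the right side $-(p\otimes q)*(n\otimes m)$ simplifies via the first rewritten relation to $-\partial\mu\otimes(n\cdot m)$. Applying the Peiffer identity and the first rewritten relation on the left then collapses the whole axiom to
\[[\partial m,n]\otimes\mu=\partial\mu\otimes(n\cdot m).\]
This is the delicate point: using the second rewritten relation with $n_0=n$, $m_1=\mu$, $m_2=m$ gives $[\partial m,n]\otimes\mu=n\otimes[\mu,m]+[\partial\mu,n]\otimes m$, and expanding $[\partial\mu,n]\otimes m$ by the first relation (with $\partial\mu\cdot m=[\mu,m]$) cancels the term $n\otimes[\mu,m]$ and leaves $\partial\mu\otimes(n\cdot m)$, completing \eqref{BLie3}. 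I expect this combined use of both tensor relations and the Peiffer identity to be the only step requiring genuine care, the remaining identities being direct substitutions.
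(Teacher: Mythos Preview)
Your proof is correct and follows essentially the same route as the paper: a direct verification of the six braiding axioms on generators, with \eqref{BLie3} requiring the combination of both rewritten \eqref{RTLie3} relations together with the Peiffer identity. Your factorisation $\lBrace\xi,\eta\rBrace=\Phi_2(\xi)\otimes\Phi_1(\eta)$ through the maps of Lemma~\ref{braidmorph} is a genuine streamlining of the well-definedness step (the paper merely asserts that the tensor relations are preserved ``since $\{-,-\}$ and $[-,-]$ verifies it''), and it also lets you dispatch \eqref{BLie5}--\eqref{BLie6} by invoking that $\Phi_1,\Phi_2$ are Lie homomorphisms rather than recomputing on fourfold generators as the paper does; but the substance of the axiom checks, and in particular the manipulation for \eqref{BLie3}, is the same as the paper's, just organised from the other end.
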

	\begin{proof}
		We need to prove that $\lBrace-,-\rBrace$ is well defined. For that we need to prove that it preserves the relations. Is immediate that it preserves \eqref{RTLie1} and \eqref{RTLie2} using the $K$-bilinearity of $[-,-]$ and $\{-,-\}$. \eqref{RTLie3} and \eqref{RTLie4} are verified too since $\{-,-\}$ and $[-,-]$ verifies it.
		
		We need then to prove the axioms of braidings.
		
		Let $n,n,n',n''\in N$, $m,m'\in M$. Then:
		\begin{align*}
		&(\Id_N\otimes \partial)(\lBrace n\otimes n',n''\otimes n'''\rBrace)=(\Id_N\otimes \partial)([n,n']\otimes \{n'',n'''\})\\
		&=[n,n']\otimes\partial\{n'',n'''\}=[n,n']\otimes[n'',n''']=[n\otimes n',n''\otimes n'''],
		\end{align*}
		\begin{align*}
		&\lBrace(\Id_N\otimes\partial)(n\otimes m),(\Id_N\otimes\partial)(n'\otimes m')\rBrace=\lBrace n\otimes \partial(m),n'\otimes\partial(m')\rBrace\\
		&=[n,\partial(m)]\otimes\{n',\partial(m')\}=-(m\star n)\otimes (n'\cdot m')=[n\otimes m,n'\otimes m'],
		\end{align*}
		\begin{align*}
		&\lBrace(\Id_N\otimes\partial)(n\otimes m),n'\otimes n''\rBrace=\lBrace n\otimes\partial(m),n'\otimes n''\rBrace=[n,\partial(m)]\otimes\{n',n''\}\\
		&=-(m\star n)\otimes\{n',n''\}=n\otimes [m,\{n',n''\}]-\{n',n''\}\star n\otimes m\\
		&=n\otimes \{\partial(m),\partial(\{n',n''\})\}-[\partial(\{n',n''\}),n]\otimes m\\
		&=-n\otimes [n',n'']\cdot m-[[n',n''],n]\otimes m=-(n'\otimes n'')*(n\otimes m),
		\end{align*}
		where we use in the third equality the second relation of \eqref{RTLie3}.
		\begin{align*}
		&\lBrace n'\otimes n'',(\Id_N\otimes\partial)(n\otimes m)\rBrace=\lBrace n'\otimes n'',n\otimes \partial(m)\rBrace=[n',n'']\otimes \{n,\partial(m)\}\\
		&=[n',n'']\otimes (n\cdot m)=n\otimes [n,n']\cdot m+[[n',n''],n]\otimes m\\
		&=(n'\otimes n'')*(n\otimes m),
		\end{align*}
		where we use in the third equality the first relation of \eqref{RTLie3}.
		\begin{align*}
		&\lBrace n_1\otimes n'_1,[n_2\otimes n'_2,n_3\otimes n'_3]\rBrace=\lBrace n_1\otimes n'_1,[n_2\otimes n'_2]\otimes[n_3\otimes n'_3]\rBrace\\
		&=[n_1,n'_1]\otimes \{[n_2,n'_2],[n_3,n'_3]\}=[n_1,n'_1]\otimes [\{n_2,n'_2\},\{n_3,n'_3\}]\\
		&=(\{n_3,n'_3\}\star [n_1,n'_1])\otimes\{n_2,n'_2\}-(\{n_2,n'_2\}\star [n_1,n'_1])\otimes \{n_3,n'_3\}\\
		&=[\partial(\{n_3,n'_3\}),[n_1,n'_1]]\otimes\{n_2,n'_2\}-[\partial(\{n_2,n'_2\}),[n_1,n'_1]]\otimes\{n_3,n'_3\}\\
		&=[[n_3,n'_3],[n_1,n'_1]]\otimes\{n_2,n'_2\}-[[n_2,n'_2],[n_1,n'_1]]\otimes\{n_3,n'_3\}\\
		&=-[[n_1,n'_1],[n_3,n'_3]]\otimes\{n_2,n'_2\}+[[n_1,n'_1],[n_2,n'_2]]\otimes\{n_3,n'_3\}\\
		&=-\lBrace[n_1,n'_1]\otimes  [n_3,n'_3],n_2\otimes n'_2\rBrace+\lBrace[n_1,n'_1]\otimes  [n_2,n'_2],n_3\otimes n'_3\rBrace\\
		&=\lBrace[n_1\otimes n'_1,n_2\otimes n'_2],n_3\otimes n'_3\rBrace-\lBrace[n_1\otimes n'_1,n_3\otimes n'_3],n_2\otimes n'_2\rBrace,
		\end{align*}
		\begin{align*}
		&\lBrace[n_1\otimes n'_1,n_2\otimes n'_2],n_3\otimes n'_3\rBrace=\lBrace[n_1,n'_1]\otimes [n_2, n'_2],n_3\otimes n'_3\rBrace\\
		&=[[n_1,n'_1],[n_2,n'_2]]\otimes \{n_3,n'_3\}\\
		&=[n_1,n'_1]\otimes [n_2,n'_2]\cdot \{n_3,n'_3\}-[n_2,n'_2]\otimes [n_1,n'_1]\cdot \{n_3,n'_3\}\\
		&=[n_1,n'_1]\otimes \{[n_2,n'_2],\partial(\{n_3,n'_3\})\}-[n_2,n'_2]\otimes \{[n_1,n'_1],\partial(\{n_3,n'_3\})\}\\
		&=[n_1,n'_1]\otimes \{[n_2,n'_2],[n_3,n'_3]\}-[n_2,n'_2]\otimes \{[n_1,n'_1],[n_3,n'_3]\}\\
		&=\lBrace n_1\otimes n'_1,[n_2,n'_2]\otimes[n_3,n'_3]\rBrace-\lBrace n_2\otimes n'_2,[n_1,n'_1]\otimes[n_3,n'_3]\rBrace\\
		&=\lBrace n_1\otimes n'_1,[n_2\otimes n'_2,n_3\otimes n'_3]\rBrace-\lBrace n_2\otimes n'_2,[n_1\otimes n'_1, n_3\otimes n'_3]\rBrace.
		\end{align*}
		In all equalities we use the properties of $\{-,-\}$ and relations of the tensor product.
		
		With this equalities we conclude that $\lBrace -,-\rBrace$ is a braiding.
	\end{proof}

\begin{prop}\label{Compunivex}
	If $(M\xrightarrow{\partial}N,\cdot,\{-,-\})$ is a braided crossed module of Lie $K$-algebras such that the crossed module $(M\xrightarrow{}N,\cdot)$ is a perfect crossed module, then
	$(N\otimes M\xrightarrow{\Id_N\otimes \partial}M\otimes M,*,\lBrace -,-\rBrace)\xrightarrow{c=(c_1,c_2)}(M\xrightarrow{\partial}N,\cdot,\{-,-\})$ is a compatible central extension, where $c_1(n\otimes m)=n\cdot m$ and $c_2(n\otimes n')=[n,n']$ and $\lBrace -,-\rBrace$ is defined in Proposition~\ref{braidunivxmod}.
\end{prop}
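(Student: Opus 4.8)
The plan is to separate the verification into the part already supplied by Casas and Ladra and the genuinely new part about the braidings. First I would invoke the crossed-module result quoted just before Proposition~\ref{braidunivxmod}: since $(M\xrightarrow{\partial}N,\cdot)$ is a perfect crossed module, the work of Casas and Ladra in \cite{PCMLie} shows that $c=(c_1,c_2)\colon(N\otimes M\xrightarrow{\Id_N\otimes\partial}N\otimes N,*)\to(M\xrightarrow{\partial}N,\cdot)$ is the universal central extension of crossed modules. In particular $c$ is a morphism of crossed modules, $c_1$ and $c_2$ are surjective, and the extension is central \emph{as a crossed module extension}. By Proposition~\ref{braidunivxmod}, $\lBrace-,-\rBrace$ is a braiding on this crossed module, so $(N\otimes M\xrightarrow{\Id_N\otimes\partial}N\otimes N,*,\lBrace-,-\rBrace)$ is indeed a braided crossed module, and $(M\xrightarrow{\partial}N,\cdot,\{-,-\})$ is one by hypothesis.

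To promote $c$ to an extension of \emph{braided} crossed modules, it only remains to check that $c$ respects the braidings, i.e. condition \eqref{BXLieH3}. Since both braidings and both components $c_i$ are $K$-bilinear (resp. $K$-homomorphisms), it suffices to test the identity on generators $n\otimes n',\,n''\otimes n'''\in N\otimes N$. I would compute the left-hand side directly from the definition of $\lBrace-,-\rBrace$ in Proposition~\ref{braidunivxmod} and of $c_1$:
\[
c_1\big(\lBrace n\otimes n',n''\otimes n'''\rBrace\big)=c_1\big([n,n']\otimes\{n'',n'''\}\big)=[n,n']\cdot\{n'',n'''\}.
\]
The right-hand side is $\{c_2(n\otimes n'),c_2(n''\otimes n''')\}=\{[n,n'],[n'',n''']\}$, so the claim reduces to the single identity $[n,n']\cdot\{n'',n'''\}=\{[n,n'],[n'',n''']\}$. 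This is immediate from the braiding axioms: by \eqref{BLie1} we have $[n'',n''']=\partial\{n'',n'''\}$, and then \eqref{BLie4} gives $\{[n,n'],\partial\{n'',n'''\}\}=[n,n']\cdot\{n'',n'''\}$. Hence \eqref{BXLieH3} holds on generators, and therefore everywhere.

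Finally, I would combine the two parts: $c$ is a morphism of braided crossed modules with $c_1$ and $c_2$ surjective, hence an extension of braided crossed modules, and it is central as a crossed module extension; by the very definition of a compatible central extension, this is exactly the assertion. I expect the braiding-compatibility computation to be the only nontrivial step, and even it is short: the genuine content is recognizing that \eqref{BXLieH3} collapses, through \eqref{BLie1} and \eqref{BLie4}, to the defining relation between the action and the braiding, while centrality and surjectivity are inherited verbatim from the crossed-module statement of Casas and Ladra and require no separate argument.
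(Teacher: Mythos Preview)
Your proposal is correct and follows essentially the same approach as the paper: invoke the Casas--Ladra result from \cite{PCMLie} to obtain that $c$ is a central extension of crossed modules, and then verify \eqref{BXLieH3} on generators via the chain $c_1(\lBrace-,-\rBrace)\to[n,n']\cdot\{n'',n'''\}\to\{[n,n'],\partial\{n'',n'''\}\}\to\{[n,n'],[n'',n''']\}$ using \eqref{BLie4} and \eqref{BLie1}. The only difference is cosmetic: you spell out the invocation of Proposition~\ref{braidunivxmod} and the surjectivity explicitly, whereas the paper leaves these implicit.
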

	\begin{proof}
		Since $(M\xrightarrow{}N,\cdot)$ is a perfect crossed module we have the central extension of crossed modules $(N\otimes M\xrightarrow{\Id_N\otimes \partial}M\otimes M,*)\xrightarrow{c=(c_1,c_2)}(M\xrightarrow{\partial}N,\cdot)$ (\cite{PCMLie}).
		
		For that, if we prove that $c$ respects the braiding it will be prove that it is a compatible central extension.
		\begin{align*}
		&c_1(\lBrace n_1\otimes n'_1,n_2\otimes n'_2\rBrace)=c_1([n_1,n'_1]\otimes\{n_2,n'_2\})=[n_1,n'_1]\cdot \{n_2,n'_2\}\\&=\{[n_1,n'_1],\partial(\{n_2,n'_2\})\}=\{[n_1,n'_1],[n_2,n'_2]\}=\{c_2(n_1\otimes n_1),c_2(n_2\otimes n'_2)\}.
		\end{align*}
		Then $c$ is a compatible central extension.
	\end{proof}

Now we will give the analogous result to the one Fukushi did in \cite{Fuk} for the case of central extensions in braided crossed modules of groups.

\begin{prop}\label{univcompcentrext}
	If $(M\xrightarrow{\partial}N,\cdot,\{-,-\})$ is a braided crossed module of Lie $K$-algebras such that $(M\xrightarrow{\partial}N,\cdot)$ is a perfect crossed module of Lie $K$-algebras,
	 then $(N\otimes M\xrightarrow{\Id_N\otimes \partial}N\otimes N,*,\lBrace -,-\rBrace )\xrightarrow{c=(c_1,c_2)}(M\xrightarrow{\partial}N,\cdot,\{-,-\})$, defined in Proposition~\ref{Compunivex} is the universal compatible central extension, in the sense that if
	 $(X_1\xrightarrow{\delta}X_2,\diamond,\lParen-,-\rParen)\xrightarrow{f}(M\xrightarrow{\partial}N,\cdot,\{-,-\})$
is another compatible central extension, then exist an unique homomorphism of braided crossed modules of Lie $K$-algebras
 $(N\otimes M\xrightarrow{\Id_N\otimes \partial}N\otimes N,*,\lBrace -,-\rBrace )\xrightarrow{h=(h_1,h_2)}(X_1\xrightarrow{\delta}X_2,\diamond,\lParen-,-\rParen)$ such that $c=f\circ h$.
	
	In addition, $h$ is the same morphism as in the universality of the non-braiding case.
\end{prop}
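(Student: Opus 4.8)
The plan is to reduce everything to the non-braided universal central extension of Casas and Ladra and then append a single braiding check. First I would recall that, since $(M\xrightarrow{\partial}N,\cdot)$ is a perfect crossed module, the morphism $c=(c_1,c_2)$ is the universal central extension of $(M\xrightarrow{\partial}N,\cdot)$ in the category of crossed modules of Lie $K$-algebras (\cite{PCMLie}). A compatible central extension $f=(f_1,f_2)$ is, by definition, a central extension of the underlying crossed modules, so forgetting the braidings and invoking this non-braided universality produces a unique morphism of crossed modules $h=(h_1,h_2)\colon(N\otimes M\xrightarrow{\Id_N\otimes\partial}N\otimes N,*)\to(X_1\xrightarrow{\delta}X_2,\diamond)$ with $c=f\circ h$. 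This $h$ is exactly the morphism of the non-braided case, which settles the final sentence of the statement, and its uniqueness among \emph{braided} morphisms is then automatic: any braided morphism satisfying $c=f\circ h$ is in particular a crossed-module morphism, hence equal to $h$ by the non-braided uniqueness.

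It therefore only remains to show that this $h$ respects the braidings, i.e.\ that it satisfies \eqref{BXLieH3}:
\[
h_1(\lBrace a,b\rBrace)=\lParen h_2(a),h_2(b)\rParen, \qquad a,b\in N\otimes N.
\]
Since both sides are $K$-bilinear and $\lBrace-,-\rBrace$, $\lParen-,-\rParen$ are determined on generators, it suffices to verify this for $a=n\otimes n'$ and $b=n''\otimes n'''$.

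For the computation I would use the description of $h$ by lifts, as in Proposition~\ref{mediadoraUcentralext}: choosing $\overline{n},\overline{n'},\overline{n''},\overline{n'''}\in X_2$ with $f_2(\overline{n})=n$, and so on, one has $h_2(n\otimes n')=[\overline{n},\overline{n'}]$ and $h_1(p\otimes q)=\overline{p}\diamond\overline{q}$ on a generator $p\otimes q\in N\otimes M$, the value being independent of the chosen lifts by centrality. Because $f$ is a morphism of braided crossed modules, $[\overline{n},\overline{n'}]$ is a lift of $[n,n']$ and, by \eqref{BXLieH3} for $f$, $\lParen\overline{n''},\overline{n'''}\rParen$ is a lift of $\{n'',n'''\}$; using $\lBrace n\otimes n',n''\otimes n'''\rBrace=[n,n']\otimes\{n'',n'''\}$ from Proposition~\ref{braidunivxmod} this gives
\[
h_1(\lBrace n\otimes n',n''\otimes n'''\rBrace)=[\overline{n},\overline{n'}]\diamond\lParen\overline{n''},\overline{n'''}\rParen.
\]
On the other hand $\lParen h_2(n\otimes n'),h_2(n''\otimes n''')\rParen=\lParen[\overline{n},\overline{n'}],[\overline{n''},\overline{n'''}]\rParen$, and applying \eqref{BLie1} in the target to rewrite $[\overline{n''},\overline{n'''}]=\delta(\lParen\overline{n''},\overline{n'''}\rParen)$ and then \eqref{BLie4} yields
\[
\lParen[\overline{n},\overline{n'}],\delta(\lParen\overline{n''},\overline{n'''}\rParen)\rParen=[\overline{n},\overline{n'}]\diamond\lParen\overline{n''},\overline{n'''}\rParen,
\]
which matches the previous line. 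This is the same mechanism already used to check that $c$ respects the braiding in Proposition~\ref{Compunivex}.

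The only delicate point, and hence the part I would treat most carefully, is the legitimacy of the lifts used to evaluate $h_1$ at the generator $[n,n']\otimes\{n'',n'''\}$: one must invoke that $h_1$ is well defined independently of the chosen lifts (centrality, inherited from the non-braided theory) in order to substitute the convenient lifts $[\overline{n},\overline{n'}]$ of $[n,n']$ and $\lParen\overline{n''},\overline{n'''}\rParen$ of $\{n'',n'''\}$. Once this is granted, the verification is a direct application of the braiding axioms \eqref{BLie1} and \eqref{BLie4} in the target, with no further obstruction; existence, $K$-linearity and the crossed-module compatibilities of $h$ are all imported wholesale from the non-braided case.
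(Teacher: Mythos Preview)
Your proposal is correct and follows essentially the same approach as the paper: reduce to the Casas--Ladra universal central extension for crossed modules to obtain $h$ and its uniqueness, then verify \eqref{BXLieH3} on generators by choosing the convenient lifts $[\overline{n},\overline{n'}]$ and $\lParen\overline{n''},\overline{n'''}\rParen$ and applying \eqref{BLie1} and \eqref{BLie4}. Your explicit flagging of the well-definedness of $h_1$ with respect to lifts is exactly the point the paper relies on tacitly when it substitutes $\overline{[n_1,\eta_1]}\diamond\overline{\{n_2,\eta_2\}}=[\overline{n_1},\overline{\eta_1}]\diamond\lParen\overline{n_2},\overline{\eta_2}\rParen$.
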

	\begin{proof}
		Let $(X_1\xrightarrow{\delta}X_2,\diamond,\lParen-,-\rParen)\xrightarrow{f}(M\xrightarrow{\partial}N,\cdot,\{-,-\})$ be a compatible central extension of Lie $K$-algebras.
		
		We have, by definition, that $(X_1\xrightarrow{\delta}X_2,\diamond)\xrightarrow{f}(M\xrightarrow{\partial}N,\cdot)$ is a central extension of crossed modules.
		
		If $(M\xrightarrow{\partial}M,\cdot)$ is perfect, with the results in \cite{PCMLie} we know that the morphism $(N\otimes M\xrightarrow{\Id_N\otimes \partial}N\otimes N,*)\xrightarrow{c=(c_1,c_2)}(M\xrightarrow{\partial}N,\cdot)$ is the universal central extension of crossed modules.
		
		We know, using again the results of Casas and Ladra (\cite{PCMLie}), that exists an unique morphism of crossed modules of Lie $K$-algebras
		$$(N\otimes M\xrightarrow{\Id_N\otimes \partial}N\otimes N,*)\xrightarrow{h=(h_1,h_2)}(X_1\xrightarrow{\delta}X_2,\diamond),$$
		defined as $h_1(n\otimes m)=\overline{n}\diamond\overline{m}$ and $h_2(n\otimes n')=[\overline{n},\overline{n'}]$ where $f_1(\overline{m})=m$, $f_2(\overline{n})=n$ and $f_2(\overline{n'})=n'$; which verify $c=f\circ h$.
		
		If $h$ is a braided crossed module homomorphism with the braidings $\lBrace -,-\rBrace$ and $\lParen-,-\rParen$, then we have the existence of morphism $h$.
		
		Let $n_1,n_2,\eta_1,\eta_2\in N$. Then:
		\begin{equation*}
		h_1(\lBrack n_1\otimes \eta_1,n_2\otimes \eta_2\rBrack)=h_1([n_1,\eta_1]\otimes \{n_2,\eta_2\})=\overline{[n_1,\eta_1]}\diamond \overline{\{n_2,\eta_2\}}.
		\end{equation*}
		Since $h_1$ and $h_2$ are well defined, and we have that $f_2(\overline{[n_1,\eta_1]})=[n_1,\eta_1]=[f_2(\overline{n_1}),f_2(\overline{\eta_1})]=f_2([\overline{n_1},\overline{\eta_1}])$ for $f_2$ be Lie $K$-homomorphisms and $f_1(\overline{\{n_2,\eta_2\}})=\{n_2,\eta_2\}=\{f_2(\overline{n_1}),f_2(\overline{\eta_1})\}=f_1(\lParen\overline{n_1},\overline{\eta_1}\rParen)$ for be $f$ a morphism of braided crossed modules; we can continue the previous equalities:
		\begin{align*}
		&h_1(\lBrace n_1\otimes \eta_1,n_2\otimes \eta_2\rBrace)=\overline{[n_1,\eta_1]}\diamond \overline{\{n_2,\eta_2\}}=[\overline{n_1},\overline{\eta_1}]\diamond\lParen \overline{n_2},\overline{\eta_2}\rParen\\
		&=\{[\overline{n_1},\overline{\eta_1}],\delta(\lParen\overline{n_2},\overline{\eta_2}\rParen)\}=\{[\overline{n_1},\overline{\eta_1}],[\overline{n_2},\overline{\eta_2}]\}=\{h_2(n_1\otimes \eta_1),h_2(n_2\otimes \eta_2)\}.
		\end{align*}
		
		Then the same pair of homomorphisms as in the non-braiding case, verifies in the braiding case that $c=f\circ h$.
		
		We need to prove the uniqueness but, since in crossed modules is unique, it must be unique in braided crossed modules, because if it exists another in braiding crossed modules, then forgetting the braidings we would contradict the universality of $c$ as a central extension of crossed modules.
	\end{proof}

Now we will prove that the universal compatible central extension exists if and only if the braided crossed module is perfect as a crossed module.

\begin{prop}\label{experentopercomp}
	Let $(Y_1\xrightarrow{\rho} Y_2,\star,\lBrack-,-\rBrack)\xrightarrow{\Psi}(M\xrightarrow{\partial}N, \cdot,\{-,-\})$ be a extension of braided crossed modules of Lie $K$-algebras such that $(Y_1\xrightarrow{\rho} Y_2,\star)$ is perfect as crossed module.
	Then $(M\xrightarrow{\partial}N,\cdot)$ is perfect.
\end{prop}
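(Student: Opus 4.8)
The plan is to imitate the argument of Proposition~\ref{experentoper}, but now working with the \emph{crossed-module} notions of commutator (the subobject $D_N(M)$ and $[N,N]$) rather than the braided one ($B_N(M)$). The key observation is that ``perfect as a crossed module'' for the source $(Y_1\xrightarrow{\rho}Y_2,\star)$ means precisely $Y_1=D_{Y_2}(Y_1)$ and $Y_2=[Y_2,Y_2]$, so $Y_1$ is generated by the elements $y\star z$ with $y\in Y_2$, $z\in Y_1$, and $Y_2$ is generated by the brackets $[y,y']$.

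First I would record that an extension $\Psi=(\Psi_1,\Psi_2)$ has $\Psi_1$ and $\Psi_2$ surjective, so $\im(\Psi_1)=M$ and $\im(\Psi_2)=N$; note that $\Psi$ being a morphism of (braided) crossed modules gives in particular the crossed-module compatibility $\Psi_1(y\star z)=\Psi_2(y)\cdot\Psi_1(z)$ from~\eqref{XLieH1} and $\Psi_2([y,y'])=[\Psi_2(y),\Psi_2(y')]$. Then I would argue on generators: since the generators $y\star z$ of $Y_1$ map under $\Psi_1$ onto a generating set of $M$, each generator of $M$ has the form $\Psi_2(y)\cdot\Psi_1(z)$, which is an element of $D_N(M)$ by the very definition of $D_N(M)$ as the subalgebra generated by elements $n\cdot m$. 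Hence $M\subset D_N(M)$, and as $D_N(M)\subset M$ always, we get $M=D_N(M)$. Symmetrically, the generators $[y,y']$ of $Y_2$ map to $[\Psi_2(y),\Psi_2(y')]$, which are commutators in $N$ and so lie in $[N,N]$, giving $N\subset[N,N]$ and thus $N=[N,N]$. Together these say $(M\xrightarrow{\partial}N,\cdot)$ is perfect as a crossed module, which is exactly the claim.

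I do not expect any real obstacle here: the proof is essentially identical in shape to Proposition~\ref{experentoper}, with $B_{Y_2}(Y_1)$ replaced by $D_{Y_2}(Y_1)$ and the braiding-image argument replaced by the action-image argument. The only point deserving a line of care is that ``perfect as a crossed module'' refers to $D$ rather than $B$, so one must use the action generators $y\star z$ (not the braiding generators $\lBrack y,z\rBrack$) to generate $Y_1$; since $D_{Y_2}(Y_1)\subset B_{Y_2}(Y_1)$, the braided hypothesis is genuinely weaker here, but that inclusion is not needed, as the conclusion concerns the crossed-module structure only. The statement $(M\xrightarrow{\partial}N,\cdot,\{-,-\})$ being perfect \emph{as a crossed module} then follows directly.
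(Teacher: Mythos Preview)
Your proposal is correct and follows essentially the same argument as the paper's own proof: surjectivity of $\Psi_1,\Psi_2$, the identification of generators $y\star z$ (resp.\ $[y,y']$) of $Y_1$ (resp.\ $Y_2$), and the compatibility $\Psi_1(y\star z)=\Psi_2(y)\cdot\Psi_1(z)$ from \eqref{XLieH1} to conclude $M=D_N(M)$ and $N=[N,N]$. Your write-up is in fact slightly cleaner, as the paper's proof contains a typographical slip writing $\Phi$ for $\Psi$ in a couple of places.
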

	\begin{proof}
		Since $\Psi$ is an extension, $\Psi_1$ and $\Psi_2$ are surjective maps. Using this fact, we know that $\im(\Psi_1)=M$ and $\im(\Psi_2)=N$.
		
		$(Y_1\xrightarrow{\rho} Y_2,\star)$ is perfect, that means, $Y_1=D_{Y_2}(Y_1)$ and $Y_2=[Y_2,Y_2]$.
		
		Since the elements $y\star z$, with $y\in Y_2$ and $z\in Y_1$ are the generators of $Y_1$, we know that $\Psi_1(y\star z)$ are the generators of $\im(\Psi_1)=M$. $\Psi_1(y\star z)=\Phi_2(y)\cdot\Phi_1(z)$, which means that the generators of $M$ are elements of the type $n\cdot m$, which implies that $M\subset D_N(M)$ and then $M=D_N(M)$.
		
		For $N$ we know that elements $[y,z]$, with $y,z\in Y_2$, are the generators of $Y_2$. Using this, we know that $\Phi_2([y,z])=[\Phi_2(y),\Phi_2(z)]$ are the generators of $\im(\Phi_2)=N$, which means that $N\subset[N,N]$, and then $N=[N,N]$.
		
		Then we have that $(M\xrightarrow{\partial}N,\cdot)$ is perfect.
	\end{proof}

\begin{lem}\label{ifnotpeefthen2comp}
	Let $(Y_1\xrightarrow{\rho} Y_2,\star,\lBrack-,-\rBrack)\xrightarrow{\Psi}(M\xrightarrow{\partial}N, \cdot,\{-,-\})$ be a compatible central extension of braided crossed modules of Lie $K$-algebras such that the crossed module $(Y_1\xrightarrow{\rho} Y_2,\star)$ is not perfect.
	
Then exists another compatible central extension \[(X_1\xrightarrow{\delta} X_2,*,\lParen-,-\rParen)\xrightarrow{f}(M\xrightarrow{\partial}N, \cdot,\{-,-\})\] and two different morphisms \[h,g\colon(Y_1\xrightarrow{\rho} Y_2,\star,\lBrack-,-\rBrack)\xrightarrow{}(X_1\xrightarrow{\delta} X_2,*,\lParen-,-\rParen)\] such that $\Psi=f\circ h=f\circ g$.
\end{lem}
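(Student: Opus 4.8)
The plan is to follow the construction in the proof of Lemma~\ref{ifnotpeefthen2} almost verbatim, but with the ordinary commutator crossed submodule $(D_{Y_2}(Y_1)\to[Y_2,Y_2],\star_C)$ in place of the braided one $(B_{Y_2}(Y_1)\to[Y_2,Y_2])$, so that the relevant notion of perfectness becomes the crossed-module one. First I would check that this commutator crossed submodule is genuinely a \emph{braided} crossed submodule of $(Y_1\xrightarrow{\rho}Y_2,\star,\lBrack-,-\rBrack)$ (this is the one new point, discussed last), and then take its cokernel. This produces a braided crossed module $Q=\bigl(\tfrac{Y_1}{D_{Y_2}(Y_1)}\xrightarrow{\overline{\rho}}\tfrac{Y_2}{[Y_2,Y_2]},\overline{\star},\lBrace-,-\rBrace_{Q}\bigr)$ carrying the induced braiding $\lBrace\overline{y},\overline{z}\rBrace_{Q}=\overline{\lBrack y,z\rBrack}$, together with the quotient morphism $i^c\colon(Y_1\xrightarrow{\rho}Y_2,\star,\lBrack-,-\rBrack)\to Q$ of braided crossed modules.

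Next I would set $X=(M\xrightarrow{\partial}N,\cdot,\{-,-\})\times Q$, the product in braided crossed modules (whose braiding I denote $\lParen-,-\rParen$), and take $f=\pi^1$ to be the first projection. It is surjective, hence an extension, and being a product projection it automatically respects the braiding. To see that it is a \emph{compatible} central extension I only have to verify centrality as a crossed-module extension, i.e. that $\ker(\pi^1_1)=0\times\tfrac{Y_1}{D_{Y_2}(Y_1)}$ and $\ker(\pi^1_2)=0\times\tfrac{Y_2}{[Y_2,Y_2]}$ sit inside the crossed-module centre $\bigl(X_1^{X_2}\to\st_{X_2}(X_1)\cap Z(X_2)\bigr)$ of $X$. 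This reduces to two facts: $\tfrac{Y_2}{[Y_2,Y_2]}$ is abelian, and it acts trivially on $\tfrac{Y_1}{D_{Y_2}(Y_1)}$; both are immediate since $[Y_2,Y_2]$ and $Y_2\star Y_1\subseteq D_{Y_2}(Y_1)$ vanish in the respective quotients. The computation is exactly the one in Lemma~\ref{ifnotpeefthen2}, now with $\st$ and $Z$ in place of $\Z_B$.

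I would then define the two factorizations by the universal property of the product: $h=(\Psi,0)$ and $g=(\Psi,i^c)$, both going from $(Y_1\xrightarrow{\rho}Y_2,\star,\lBrack-,-\rBrack)$ to $X$. Both are morphisms of braided crossed modules; for $g$ this uses precisely the induced braiding, since $i^c_1(\lBrack y,z\rBrack)=\overline{\lBrack y,z\rBrack}=\lBrace i^c_2(y),i^c_2(z)\rBrace_{Q}$ by construction, while $\Psi$ respects the braiding by hypothesis. By construction $\pi^1\circ h=\Psi=\pi^1\circ g$. Finally $h\neq g$: since $(Y_1\xrightarrow{\rho}Y_2,\star)$ is not perfect \emph{as a crossed module}, either $Y_1\neq D_{Y_2}(Y_1)$ or $Y_2\neq[Y_2,Y_2]$, so $i^c_1\neq0$ or $i^c_2\neq0$, whence $g\neq h$ and the proof is finished.

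The main obstacle is the very first step: showing that the braiding descends to $Q$, equivalently that $\lBrack y,z\rBrack\in D_{Y_2}(Y_1)$ whenever $y\in[Y_2,Y_2]$ (and symmetrically in the second slot). The braided case of Lemma~\ref{ifnotpeefthen2} gives no help here, because quotienting by $B_{Y_2}(Y_1)$ kills \emph{all} braidings and leaves the zero braiding, whereas on $\tfrac{Y_1}{D_{Y_2}(Y_1)}$ the induced braiding need not vanish, as $D_{Y_2}(Y_1)\subsetneq B_{Y_2}(Y_1)$ in general. The key observation is the inclusion $[Y_2,Y_2]\subseteq\rho(Y_1)$, which holds because each generator satisfies $[u,v]=\rho(\lBrack u,v\rBrack)$ by \eqref{BLie1}; writing $y=\rho(\ell)$ with $\ell\in Y_1$ and using \eqref{BLie3} gives $\lBrack y,z\rBrack=\lBrack\rho(\ell),z\rBrack=-z\star\ell\in D_{Y_2}(Y_1)$, and \eqref{BLie4} disposes of the symmetric case. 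With this, $(D_{Y_2}(Y_1)\to[Y_2,Y_2])$ is closed under the braiding, so it is a braided crossed submodule and $Q$ inherits a well-defined braiding satisfying \eqref{BLie1}--\eqref{BLie6}.
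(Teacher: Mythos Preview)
Your proposal is correct and follows essentially the same construction as the paper: form the cokernel of the ordinary commutator braided crossed submodule $(D_{Y_2}(Y_1)\to[Y_2,Y_2])$, take the product of this cokernel with $(M\xrightarrow{\partial}N,\cdot,\{-,-\})$, and use the two product-induced maps $(\Psi,0)$ and $(\Psi,i^c)$. The paper verifies that the commutator is a braided crossed submodule via \eqref{BLie1} and \eqref{BLie4}, computing $\lBrack[y,y'],[z,z']\rBrack=\lBrack[y,y'],\rho(\lBrack z,z'\rBrack)\rBrack=[y,y']\star\lBrack z,z'\rBrack\in D_{Y_2}(Y_1)$, and then cites \cite{PCMLie} for the crossed-module centrality of the projection; your argument via \eqref{BLie1} and \eqref{BLie3} is the same trick, and you are in fact a bit more careful in checking that the braiding is well defined on the \emph{quotient} (one argument in $[Y_2,Y_2]$ suffices), a point the paper leaves implicit.
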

	\begin{proof}
		If $(Y_1\xrightarrow{\rho} Y_2,\star,\lBrack-,-\rBrack)$ is a braided crossed module, then we know that the crossed module $(D_{Y_2}(Y_1)\xrightarrow{\rho|_{D_{Y_2}(Y_1)}}[Y_2,Y_2],\star_C)$ is a crossed submodule of $(Y_1\xrightarrow{\rho} Y_2,\star)$. But it is itself a braided crossed submodule of $(Y_1\xrightarrow{\rho} Y_2,\star,\lBrack-,-\rBrack)$ since, if we have $[y,y'],[z,z']\in [Y_2,Y_2]$, then:
		\begin{equation*}
		\lBrack [y,y'],[z,z']\rBrack=\lBrack [y,y'], \rho( \lBrack z,z'\rBrack)\rBrack=[y,y']\star \lBrack z,z'\rBrack \in D_{Y_2}(Y_1).
		\end{equation*}
		Lets denote $i\colon( D_{Y_2}(Y_1)\xrightarrow{\rho|_{D_{Y_2}(Y_1)}}[Y_2,Y_2],\star_C,\lBrack-,-\rBrack_C)\xrightarrow{}(Y_1\xrightarrow{\rho} Y_2,\star,\lBrack-,-\rBrack)$.
		
		Then we can take as a new crossed module the product of the $\coker(i)$ with $(M\xrightarrow{\partial}N,\cdot,\{-,-\})$ whit the extension given by the second projection. We can use the result given in \cite{PCMLie} to say that it will be a compatible central extension, since in that cite they prove that it was a central extension of braided crossed modules.
		
		If $(Y_1\xrightarrow{\rho} Y_2,\star)$ is not perfect then in \cite{PCMLie} were given two maps, they are the induced in the product by the projection in the quotient and $\Psi$ and the zero morphisms and $\Psi$. Since the category of the braided crossed had the same product with induced braiding we have  that morphisms are, in fact, braided crossed morphisms. They are different and both makes $\Psi=f\circ h=f\circ g$.
	\end{proof}

\begin{cor}\label{ifnotperthennotunivcomp}
	The universal compatible central extension of the braided crossed module $(M\xrightarrow{\partial}N,\cdot,\{-,-\})$, if exists, is perfect as a crossed module.
\end{cor}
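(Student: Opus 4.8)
The plan is to argue by contradiction, mirroring exactly the proof of Corollary~\ref{ifnotperthennotuniv} but substituting the compatible analogue Lemma~\ref{ifnotpeefthen2comp} for Lemma~\ref{ifnotpeefthen2}. Denote the universal compatible central extension (which we assume to exist) by $\mathcal{U}\xrightarrow{u}(M\xrightarrow{\partial}N,\cdot,\{-,-\})$, where $\mathcal{U}=(U_1\xrightarrow{\sigma}U_2,\diamond,\lBrack-,-\rBrack)$, and suppose toward a contradiction that the underlying crossed module $(U_1\xrightarrow{\sigma}U_2,\diamond)$ is \emph{not} perfect.

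First I would observe that $u$ is itself a compatible central extension of $(M\xrightarrow{\partial}N,\cdot,\{-,-\})$ whose source fails to be perfect as a crossed module; hence it satisfies the hypotheses of Lemma~\ref{ifnotpeefthen2comp} with $\Psi=u$. Applying that lemma yields a second compatible central extension $f\colon\mathcal{X}'\xrightarrow{}(M\xrightarrow{\partial}N,\cdot,\{-,-\})$ together with two \emph{distinct} morphisms of braided crossed modules $h,g\colon\mathcal{U}\xrightarrow{}\mathcal{X}'$ satisfying $u=f\circ h=f\circ g$.

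This directly contradicts the universal property recorded in Proposition~\ref{univcompcentrext}: universality of $u$ demands that for the compatible central extension $f$ there be a \emph{unique} morphism $\mathcal{U}\to\mathcal{X}'$ making the triangle over $(M\xrightarrow{\partial}N,\cdot,\{-,-\})$ commute, yet $h\neq g$ both do so. Therefore the assumption is untenable and $(U_1\xrightarrow{\sigma}U_2,\diamond)$ must be perfect as a crossed module.

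The only step requiring genuine care, and the main (though mild) obstacle, is confirming that Lemma~\ref{ifnotpeefthen2comp} is truly applicable: one must be sure that the two maps $h,g$ it produces are comparison morphisms \emph{in the braided category} over the fixed base $(M\xrightarrow{\partial}N,\cdot,\{-,-\})$, and not merely morphisms of the underlying crossed modules. Both properties are already built into the conclusion of that lemma, so invoking it supplies everything needed; the remainder of the argument is a purely formal consequence of the definition of universality.
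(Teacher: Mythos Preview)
Your proof is correct and follows essentially the same approach as the paper's own proof: argue by contradiction, apply Lemma~\ref{ifnotpeefthen2comp} to the assumed non-perfect universal object to obtain two distinct factorizations through another compatible central extension, and conclude that this violates universality. The only minor remark is that you cite Proposition~\ref{univcompcentrext} for the uniqueness clause, whereas the uniqueness you actually need is simply the defining property of a \emph{universal} compatible central extension; Proposition~\ref{univcompcentrext} presupposes perfection of the base, which is not assumed here.
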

	\begin{proof}
		If the universal extension is not perfect, then using Lemma~\ref{ifnotpeefthen2comp} we have another compatible central extension such that we have two different morphism from the universal compatible central extension which verifies the commutative diagram of its definition, which enter in contradiction with the universality.
	\end{proof}

With this we can end this section:

\begin{cor}
	A braided crossed module of Lie $K$-algebras has universal compatible central extension if and only if it is perfect as crossed module.
\end{cor}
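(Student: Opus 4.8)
The plan is to mirror exactly the two-directional argument used for the analogous Theorem in Section~2, substituting everywhere the ``compatible'' versions of the auxiliary results that have just been established. So the proof splits into a sufficiency direction and a necessity direction, and in each case the heavy lifting has already been done by the preceding propositions, corollaries and lemmas.

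First I would handle the easy implication. Suppose $(M\xrightarrow{\partial}N,\cdot,\{-,-\})$ is perfect as a crossed module, i.e.\ $M=D_N(M)$ and $N=[N,N]$. Then Proposition~\ref{univcompcentrext} directly provides the universal compatible central extension, namely $(N\otimes M\xrightarrow{\Id_N\otimes \partial}N\otimes N,*,\lBrace -,-\rBrace )\xrightarrow{c}(M\xrightarrow{\partial}N,\cdot,\{-,-\})$, with $\lBrace -,-\rBrace$ the braiding built in Proposition~\ref{braidunivxmod}. This settles the ``if'' direction with no extra work.

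For the converse, assume the braided crossed module admits a universal compatible central extension. By Corollary~\ref{ifnotperthennotunivcomp}, that universal object is itself perfect as a crossed module. Now the universal compatible central extension is, in particular, an extension of braided crossed modules whose source is perfect as a crossed module, so I would apply Proposition~\ref{experentopercomp} to this extension and conclude that $(M\xrightarrow{\partial}N,\cdot)$ is perfect. This closes the ``only if'' direction and hence the equivalence.

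The argument is almost purely a bookkeeping assembly of earlier statements, so I do not expect a serious obstacle; the only subtle point is making sure the hypotheses match exactly. Specifically, I must check that the universal compatible central extension really qualifies as an ``extension of braided crossed modules'' in the sense required by Proposition~\ref{experentopercomp} (i.e.\ that both component maps are surjective, which follows since it is an extension by construction) and that ``perfect as a crossed module'' is the correct notion of perfectness throughout, rather than the stronger braided perfectness ($M=B_N(M)$) used in Section~2. Keeping these two notions of perfectness distinct is the one place where care is needed, but since Corollary~\ref{ifnotperthennotunivcomp} and Proposition~\ref{experentopercomp} are both phrased in terms of perfectness as a crossed module, the chain of implications goes through cleanly.
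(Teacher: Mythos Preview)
Your proposal is correct and follows essentially the same approach as the paper: the ``if'' direction invokes Proposition~\ref{univcompcentrext}, and the ``only if'' direction combines Corollary~\ref{ifnotperthennotunivcomp} with Proposition~\ref{experentopercomp}, exactly as the paper does. Your additional remarks about checking hypotheses and distinguishing the two notions of perfectness are accurate and make the argument slightly more explicit than the paper's terse version.
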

	\begin{proof}
		If the braided crossed module is perfect then using Proposition~\ref{univcompcentrext} we have its universal central extension.
		
		If the braided crossed module have universal central extension, then using Corollary~\ref{ifnotperthennotunivcomp} we have that the universal compatible central extension is perfect as crossed module. Since it is a extension, we can use Lemma~\ref{experentopercomp} and conclude that our braided crossed module is perfect as a crossed module.
	\end{proof}

\section{Relationship between universal central extension and universal compatible central extension in the braided case}

In this section we will show the relation between the notions of universal central extension and universal compatible central extension in the case of Lie $K$-algebras.

\begin{lem}\label{perfissame}
	Let $(M\xrightarrow{\partial}N,\cdot, \{-,-\})$ be a braided crossed module of Lie $K$-algebras. Then,
	$(M\xrightarrow{\partial}N,\cdot, \{-,-\})$ is perfect if and only if $(M\xrightarrow{\partial}N,\cdot)$ is perfect.
	
\noindent In fact, we have that if $N=[N,N]$ then $B_N(M)=D_N(M)$.
\end{lem}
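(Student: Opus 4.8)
The plan is to establish the displayed ``In fact'' statement first, since the claimed equivalence drops out of it immediately. Recall from the Remark preceding the Lemma that one always has $D_N(M)\subset B_N(M)$. Hence, under the hypothesis $N=[N,N]$, proving the ``In fact'' claim reduces to verifying the reverse inclusion $B_N(M)\subset D_N(M)$.

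The key observation I would exploit is that $N=[N,N]$ forces $N\subset\im\partial$. Indeed, by \eqref{BLie1} each generating bracket of $[N,N]$ satisfies $[n,n']=\partial\{n,n'\}\in\im\partial$; since $\partial$ is a Lie $K$-homomorphism its image is a subalgebra, so $\im\partial$ contains the whole subalgebra generated by these brackets, which is $N$ itself. Thus $N=\im\partial$. Now $B_N(M)$ is the Lie subalgebra generated by the elements $\{n,n'\}$ with $n,n'\in N$, and $D_N(M)$ is already a Lie subalgebra, so it suffices to check that each such generator lies in $D_N(M)$. Writing $n'=\partial m'$ for some $m'\in M$, property \eqref{BLie4} gives
\[
\{n,n'\}=\{n,\partial m'\}=n\cdot m'\in D_N(M).
\]
Therefore $B_N(M)\subset D_N(M)$, which combined with the inclusion from the Remark yields $B_N(M)=D_N(M)$ and proves the ``In fact'' claim.

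The equivalence then follows by unwinding the two notions of perfectness, which differ only in whether one requires $M=B_N(M)$ or $M=D_N(M)$, both under the common condition $N=[N,N]$. If the braided crossed module is perfect, then $N=[N,N]$, so the ``In fact'' part gives $B_N(M)=D_N(M)$ and hence $M=B_N(M)=D_N(M)$, i.e.\ the crossed module is perfect; conversely, if the crossed module is perfect, then again $N=[N,N]$ gives $D_N(M)=B_N(M)$ and so $M=D_N(M)=B_N(M)$, i.e.\ the braided crossed module is perfect.

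I do not expect a serious obstacle here: the entire content sits in the remark that $N=[N,N]$ makes $\partial$ surject onto $N$ via \eqref{BLie1}, after which \eqref{BLie4} turns each braiding generator into an action generator. The only point deserving a little care is the reading of $N=[N,N]$ as ``every element of $N$ is a linear combination of brackets'', so that such an element is indeed $\partial$ of the corresponding (finite) sum of braidings, legitimizing the choice of $m'$ above.
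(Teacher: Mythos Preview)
Your proof is correct and follows essentially the same route as the paper: both use \eqref{BLie1} to realize elements of $N=[N,N]$ as images under $\partial$ and then \eqref{BLie4} to rewrite each generator $\{n,n'\}$ as an action element $n\cdot m'\in D_N(M)$. The only cosmetic difference is that you abstract out the surjectivity of $\partial$ as a separate observation, whereas the paper exhibits the specific preimage $\{n'_1,n'_2\}$ of $n'=[n'_1,n'_2]$ explicitly in the same computation.
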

	\begin{proof}
		We have $N=[N,N]$ because in both of cases they are perfect. For that we only need to check the $M$.
		
		If $(M\xrightarrow{\partial}N,\cdot)$ is perfect, then $D_N(M)=M$. Since we know that $D_N(M)\subset B_N(M)\subset M$, we have that $B_N(M)=M$ and $(M\xrightarrow{\partial}N,\cdot, \{-,-\})$ is perfect.
		
		In the other hand, since $(M\xrightarrow{\partial}N,\cdot, \{-,-\})$ is perfect we have that $B_N(M)=M$. For that we only need to prove that with this hypothesis $D_N(M)=B_N(M)$. We know that $D_N(M)\subset B_N(M)$ so we need to prove that $B_N(M) \subset D_N(M)$.
		
		Then if $\{n,n'\}$ is a generator of $B_N(M)$, since $N=[N,N]$ by be perfect, we have that $\{n,n'\}=\{[n_1,n_2],[n'_1,n'_2]\}$ and:
		\begin{equation*}
		\{[n_1,n_2],[n'_1,n'_2]\}=\{[n_1,n_2],\partial\{n'_1,n'_2\}\}=[n_1,n_2]\cdot \{n'_1,n'_2\}\in D_N(M).
		\end{equation*}
		So, $B_N(M)\subset D_N(M)$ and $(M\xrightarrow{\partial}N,\cdot)$ is perfect.
	\end{proof}

\begin{lem}\label{perfissamecenter}
	Let $(X_1\xrightarrow{\delta}X_2,\diamond,\lParen-,-\rParen)\xrightarrow{f}(M\xrightarrow{\partial}N,\cdot, \{-,-\})$ be an extension of braided crossed modules of Lie $K$-algebras. Then, if $(M\xrightarrow{\partial}N,\cdot,\{-,-\})$ is perfect we have that
	$f$ is a central extension if and only if $f$ is a compatible central extension.
	
\noindent In fact, if $N=[N,N]$ then $\Z_B(N)=Z(N)\cap \st_N(M)$.
\end{lem}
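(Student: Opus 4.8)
The plan is to reduce the equivalence to the displayed ``In fact'' identity and to prove that identity directly from the braiding axioms. One implication costs nothing: by the inclusion $\Z_B(N)\subseteq Z(N)\cap\st_N(M)$ recorded in the Remark (read on the source), the defining kernel condition of a braided central extension, $\ker f_2\subseteq\Z_B(X_2)$, is stronger than that of a compatible central extension, $\ker f_2\subseteq Z(X_2)\cap\st_{X_2}(X_1)$, while the condition $\ker f_1\subseteq X_1^{X_2}$ is common to both. Hence every central extension is compatible central, with no use of perfectness, and the whole content lies in the converse together with the ``In fact'' clause.

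For the ``In fact'' clause only the inclusion $Z(N)\cap\st_N(M)\subseteq\Z_B(N)$ needs proof, the reverse being the standing inclusion. Here I would use $N=[N,N]$: since the linear span of the brackets $[n',n'']$ is already a subalgebra it equals $[N,N]=N$, so by $K$-bilinearity of $\{-,-\}$ it suffices, for $n\in Z(N)$, to evaluate $\{n,[n',n'']\}$ and $\{[n',n''],n\}$. Now \eqref{BLie5} gives $\{n,[n',n'']\}=\{[n,n'],n''\}-\{[n,n''],n'\}$, which is $0$ because $n\in Z(N)$ kills $[n,n']$ and $[n,n'']$; symmetrically \eqref{BLie6} gives $\{[n',n''],n\}=\{n',[n'',n]\}-\{n'',[n',n]\}=0$. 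Thus $n\in\Z_B(N)$, so in fact $Z(N)\subseteq\Z_B(N)$ and all three subalgebras coincide.

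For the converse of the equivalence, assume $(M\xrightarrow{\partial}N,\cdot,\{-,-\})$ is perfect, so $N=[N,N]$, and let $f$ be compatible central, i.e. $\ker f_2\subseteq Z(X_2)\cap\st_{X_2}(X_1)$. I want $\ker f_2\subseteq\Z_B(X_2)$. Fix $w\in\ker f_2$ and an arbitrary $x\in X_2$; I must show $\lParen w,x\rParen=0=\lParen x,w\rParen$. Since $f_2$ is surjective and $N=[N,N]$, I may write $f_2(x)$ as a sum of brackets and lift, obtaining $x=\sum_i[\alpha_i,\beta_i]+w_0$ with $\alpha_i,\beta_i\in X_2$ and $w_0\in\ker f_2$. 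Applying \eqref{BLie5} in the source and using $w\in Z(X_2)$ exactly as above, each $\lParen w,[\alpha_i,\beta_i]\rParen$ vanishes, so the problem collapses to $\lParen w,x\rParen=\lParen w,w_0\rParen$ with both $w,w_0\in\ker f_2$ (and symmetrically, with the arguments reversed, via \eqref{BLie6}).

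The main obstacle is this residual term $\lParen w,w_0\rParen$: one must show the source braiding vanishes on $\ker f_2\times\ker f_2$. This is the only non-formal point, since $\ker f_2$ need not meet $[X_2,X_2]$ or $\im\delta$, so neither \eqref{BLie5}--\eqref{BLie6} nor \eqref{BLie3}--\eqref{BLie4} applies to it verbatim. My intended route is to exploit $\ker f_2\subseteq\st_{X_2}(X_1)$ through the action--braiding compatibilities \eqref{BLie3} and \eqref{BLie4}, which force a kernel element to braid trivially with everything coming from $\delta$; combined with the decomposition $X_2=[X_2,X_2]+\ker f_2$ (valid because $f_2$ is onto the perfect $N$, with $\ker f_2$ central), this should pin $\lParen w,w_0\rParen$ to $0$ and thereby establish the source identity $\Z_B(X_2)=Z(X_2)\cap\st_{X_2}(X_1)$. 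Once this equality holds the two kernel conditions coincide and the equivalence $f$ central $\Leftrightarrow$ $f$ compatible central follows immediately; I expect the verification that the kernel braids trivially with itself to be the delicate step where the perfectness of $N$ is genuinely used.
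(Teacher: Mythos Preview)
Your argument for the ``In fact'' clause is correct but goes through different axioms than the paper's. You use \eqref{BLie5}/\eqref{BLie6} together with $n\in Z(N)$, obtaining the stronger conclusion $Z(N)\subseteq\Z_B(N)$. The paper instead writes $[n_1,n_2]=\partial\{n_1,n_2\}$ via \eqref{BLie1} and then applies \eqref{BLie3}/\eqref{BLie4} to get
\[
\{n,[n_1,n_2]\}=\{n,\partial\{n_1,n_2\}\}=n\cdot\{n_1,n_2\}=0,
\qquad
\{[n_1,n_2],n\}=-n\cdot\{n_1,n_2\}=0,
\]
using only $n\in\st_N(M)$. So the paper exploits the stabilizer hypothesis, you exploit the center hypothesis; both are valid for the target where $N=[N,N]$ guarantees every element is a sum of brackets.

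For the equivalence itself you are right that the work happens in the source $(X_1\to X_2)$, and your decomposition $x=\sum_i[\alpha_i,\beta_i]+w_0$ with $w_0\in\ker f_2$ is the natural move since $X_2$ need not be perfect. The paper's proof is notationally sloppy here---it writes $M^N$, $\st_N(M)$, $Z(N)$, $Z_B(N)$ where it should write the source objects $X_1^{X_2}$, $\st_{X_2}(X_1)$, $Z(X_2)$, $Z_B(X_2)$, and then takes ``an arbitrary element of $N=[N,N]$'' as a single bracket, tacitly treating the ambient algebra as perfect. Read literally in the source this is exactly the step you flag as missing: one still has to dispose of $\lParen w,w_0\rParen$ with $w,w_0\in\ker f_2$.

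Your proposed fix, however, does not close this gap. The route via \eqref{BLie3}/\eqref{BLie4} and $\ker f_2\subseteq\st_{X_2}(X_1)$ is precisely the paper's method, and it only kills $\lParen w,x\rParen$ when $x$ lies in $[X_2,X_2]=\delta\bigl(B_{X_2}(X_1)\bigr)\subseteq\im\delta$; it gives no information about $\lParen w,w_0\rParen$ when $w_0$ is a central kernel element not in $\im\delta$ or $[X_2,X_2]$. The decomposition $X_2=[X_2,X_2]+\ker f_2$ does not help for the same reason. So your proposal, like the paper's argument once the notation is corrected, leaves the vanishing of the braiding on $\ker f_2\times\ker f_2$ unproved; the sketch you give at the end does not supply the missing idea.
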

	\begin{proof}
		If $f$ is a central extension, then $\ker(f_1)\subset M^N$ and $\ker(f_2)\subset \Z_B(N)$. In general $\Z_B(N)\subset \st_N(M)\cap \Z_N$ so $\ker(f_2)\subset \st_N(M)\cap \Z_N$ and $f$ is a compatible central extension.
		
		If $f$ is a compatible central extension, then we know that $\ker(f_1)\subset M^N$ and $\ker(f_2)\subset \st_N(M)\cap\Z(N)$. We need to prove that $\ker(f_2)\subset \Z_B(N)$ so is enough to show that $\Z_B(N)=\Z(N)\cap\st_N(M)$. Since $Z_B(N)\subset \st_N(M)\cap \Z(N)$ we need to prove that $\st_N(M)\cap \Z(N)\subset Z_B(N)$.
		
		For that we will take $n\in \st_N(M)\cap \Z(N)$. We need an arbitrary element of $N=[N,N]$ so we will take $x=[n_1,n_2]\in N$. We have:
		\begin{align*}
		&\{n,x\}=\{n,[n_1,n_2]\}=\{n,\partial\{n_1,n_2\}\}=n\cdot \{n_1,n_2\}=0,\\
		&\{x,n\}=\{[n_1,n_2],n\}=\{\partial\{n_1,n_2\},n\}=-n\cdot\{n_1,n_2\}=0,
		\end{align*}
		where we use that $n\in \st_N(M)$.
		
		We have then that $\ker(f_2)\subset \Z_B(N)$ and $f$ is a central extension.
	\end{proof}

\begin{prop}
	Let $\mathcal{X}=(M\xrightarrow{\partial}N,\cdot,\{-,-\})$ is braided crossed module of Lie $K$-algebras.
	
	If $(M\xrightarrow{\partial}N,\cdot,\{-,-\})$ is a perfect braided crossed module (using Lemma~\ref{perfissame} is the same to say that $(M\xrightarrow{\partial}N,\cdot)$ is perfect), then its universal central extension $\mathcal{U}\xrightarrow{\Phi}\mathcal{X}$ (Corollary~\ref{ifperhaveuniv}) and its universal compatible central extension $\mathcal{V}\xrightarrow{c}\mathcal{X}$ (Proposition~\ref{univcompcentrext}) are isomorphic with the unique $h\colon\mathcal{U}\xrightarrow{}\mathcal{V}$ such that $\Phi=c\circ h$.
\end{prop}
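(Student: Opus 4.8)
The plan is to exploit Lemma~\ref{perfissamecenter}, which tells us that over a perfect $\mathcal{X}$ the notions of central extension and compatible central extension coincide; once this is established, $\mathcal{U}$ and $\mathcal{V}$ become initial objects in one and the same category, and the isomorphism is forced by the uniqueness of initial objects.

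First I would record that both universal objects exist. By Lemma~\ref{perfissame} the braided crossed module $\mathcal{X}=(M\xrightarrow{\partial}N,\cdot,\{-,-\})$ is perfect if and only if $(M\xrightarrow{\partial}N,\cdot)$ is perfect, so Corollary~\ref{ifperhaveuniv} produces the universal central extension $\mathcal{U}\xrightarrow{\Phi}\mathcal{X}$ and Proposition~\ref{univcompcentrext} produces the universal compatible central extension $\mathcal{V}\xrightarrow{c}\mathcal{X}$.

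Next I would check that each of these two extensions lies in \emph{both} classes. On one side, $\Phi$ is a central extension of braided crossed modules (Corollary~\ref{extensioniffperfect}), and since every central extension is automatically a compatible central extension (the inclusion remarked after the definition of compatible central extension), $\Phi$ is also a compatible central extension. On the other side, $c$ is a compatible central extension by Proposition~\ref{univcompcentrext}, and because $\mathcal{X}$ is perfect, Lemma~\ref{perfissamecenter} upgrades $c$ to a genuine central extension of braided crossed modules. Thus, over a perfect $\mathcal{X}$, the class of central extensions and the class of compatible central extensions agree; and since a morphism of either kind of extension over $\mathcal{X}$ is just a morphism of braided crossed modules commuting with the two structure maps, the two categories of extensions over $\mathcal{X}$ are literally the same category.

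With this identification both $\mathcal{U}$ (initial among central extensions) and $\mathcal{V}$ (initial among compatible central extensions) are initial objects in the same category. By the universal property of $\mathcal{U}$ there is a unique $h\colon\mathcal{U}\to\mathcal{V}$ with $\Phi=c\circ h$, and by the universal property of $\mathcal{V}$ there is a unique $h'\colon\mathcal{V}\to\mathcal{U}$ with $c=\Phi\circ h'$. Then $h'\circ h\colon\mathcal{U}\to\mathcal{U}$ satisfies $\Phi\circ(h'\circ h)=(\Phi\circ h')\circ h=c\circ h=\Phi$, so the uniqueness clause in the universal property of $\mathcal{U}$ (applied to the central extension $\mathcal{U}$ itself, for which $\Id_{\mathcal{U}}$ is one such morphism) forces $h'\circ h=\Id_{\mathcal{U}}$; symmetrically $c\circ(h\circ h')=c$ gives $h\circ h'=\Id_{\mathcal{V}}$. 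Hence $h$ is the asserted isomorphism. The only genuinely delicate step — the one I would treat most carefully — is verifying that the two classes of extensions coincide, that is, invoking Lemma~\ref{perfissamecenter} in the direction that makes $c$ central; once that is in place the conclusion is the standard uniqueness-of-initial-objects argument.
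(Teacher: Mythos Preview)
Your proposal is correct and follows essentially the same route as the paper: you use Lemma~\ref{perfissamecenter} to see that $c$ is a genuine central extension, note that $\Phi$ is automatically compatible, obtain $h$ and $h'$ from the two universal properties, and conclude $h'\circ h=\Id_{\mathcal{U}}$ and $h\circ h'=\Id_{\mathcal{V}}$ by uniqueness. The only difference is cosmetic: you phrase the conclusion as ``two initial objects in the same category,'' whereas the paper argues the two compositions directly, but the content is identical.
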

	\begin{proof}
		Since $\mathcal{V}\xrightarrow{c}\mathcal{X}$ is a compatible central extension, we know using Lemma~\ref{perfissamecenter} (by hypothesis $(M\xrightarrow{\partial}N,\cdot,\{-,-\})$ is perfect) that it is a central extension, so using the universality of $\mathcal{U}$ we know that there are an unique morphism $\mathcal{U}\xrightarrow{h}\mathcal{V}$ such that $\Phi=c\circ h$.
		
		Since $\mathcal{U}\xrightarrow{\Phi}\mathcal{X}$ is a central extension is immediate that it is a compatible central extension $(\Z_B(N)\subset \st_N(M)\cap \Z(N))$ so by universality of $\mathcal{V}$ we have that exists an unique morphism $\mathcal{V}\xrightarrow{h'}\mathcal{U}$ such that $c=\Phi\circ h'$.
		
		Using the universality of $\mathcal{U}$, since $\Phi\circ (h'\circ  h)=c\circ h=\Phi$, we know that $h'\circ h=\Id_{\mathcal{U}}$.
		
		By the same arguments using the universality of $\mathcal{V}$, we have that $h\circ h'=\Id_{\mathcal{V}}$.
		
		So we have that $h$ is an isomorphism between $\mathcal{U}$ and $\mathcal{V}$ with inverse $h'$.
	\end{proof}

\begin{cor}
	If $M$ is a perfect Lie $K$-algebra, then $M\otimes (M\otimes M)\simeq M\otimes M$.
\end{cor}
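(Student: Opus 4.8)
The plan is to specialize the Proposition immediately preceding this corollary to one carefully chosen braided crossed module, and then simply read off the induced isomorphism of the ``top'' Lie algebras.

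First I would take the canonical braided crossed module of the second item of Example~\ref{ExBra}, namely $\mathcal{X}=(M\otimes M\xrightarrow{\partial}M,\cdot,\{-,-\})$ with $\partial(m\otimes m')=[m,m']$, action $m\cdot(m_1\otimes m_2)=m\otimes[m_1,m_2]$, and braiding $\{m,m'\}=m\otimes m'$. Here the bottom Lie algebra is $M$ and the top Lie algebra is $M\otimes M$. Next I would check that $\mathcal{X}$ is a \emph{perfect} braided crossed module whenever $M$ is perfect: the bottom condition $[M,M]=M$ holds by hypothesis, while for the top condition note that $B_M(M\otimes M)$ is by definition the Lie subalgebra of $M\otimes M$ generated by the images of the braiding $\{m,m'\}=m\otimes m'$, and these elementary tensors are exactly the generators of $M\otimes M$, so $B_M(M\otimes M)=M\otimes M$. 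By Lemma~\ref{perfissame} this equivalently makes the underlying crossed module $(M\otimes M\xrightarrow{\partial}M,\cdot)$ perfect, which is what Proposition~\ref{univcompcentrext} requires.

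I would then invoke the previous Proposition: since $\mathcal{X}$ is perfect, its universal central extension $\mathcal{U}$ (Corollary~\ref{ifperhaveuniv}) and its universal compatible central extension $\mathcal{V}$ (Proposition~\ref{univcompcentrext}) both exist and are isomorphic as braided crossed modules via a morphism $h=(h_1,h_2)$. Instantiating the two constructions with bottom $N=M$ and top $M\otimes M$ yields
\[
\mathcal{U}=(M\otimes M\xrightarrow{\Id}M\otimes M,\dots),\qquad \mathcal{V}=(M\otimes(M\otimes M)\xrightarrow{\Id_M\otimes\partial}M\otimes M,\dots),
\]
where the top of $\mathcal{U}$ is $N\otimes N=M\otimes M$ and the top of $\mathcal{V}$ is $N\otimes(M\otimes M)=M\otimes(M\otimes M)$. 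The component $h_1$ of the isomorphism is then a Lie $K$-algebra isomorphism between these two top parts, i.e. $M\otimes M\xrightarrow{\sim}M\otimes(M\otimes M)$, which is precisely the claim.

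The only genuine verification is that $\mathcal{X}$ is perfect, and this is immediate from the definition of $B_N$; all the substantive content is already packaged in the preceding Proposition. The single point demanding care is bookkeeping: one must track that the isomorphism to be exploited is that of the \emph{top} Lie algebras (the $h_1$ component), so that one indeed recovers $M\otimes(M\otimes M)\simeq M\otimes M$ rather than the trivial identification of the bottom parts $M\otimes M$.
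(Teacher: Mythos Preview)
Your proposal is correct and follows essentially the same approach as the paper: take the canonical braided crossed module $\mathcal{X}=(M\otimes M\xrightarrow{\partial}M,\cdot,\{-,-\})$ from Example~\ref{ExBra}, verify it is perfect, apply the preceding Proposition, and read off $h_1$ as the desired Lie algebra isomorphism on the top components. The only difference is that the paper, after reaching this point, goes on to write down the isomorphism and its inverse explicitly (namely $h_1(m\otimes m')=m\otimes(m'_1\otimes m'_2)$ for a chosen decomposition $m'=[m'_1,m'_2]$, with inverse $c_1(m\otimes(m'\otimes m''))=m\otimes[m',m'']$), but this is supplementary to the bare statement of the corollary.
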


	\begin{proof}
		If $M$ is perfect, then $(M\otimes M\xrightarrow{\partial}M,\cdot,\{-,-\})$ is a perfect braiding crossed module with $\partial(m\otimes m')=[m,m']$, $m\cdot (m_1\otimes m_2)=m\otimes [m_1,m_2]$ and $\{m_1,m_2\}=m_1\otimes m_2$. This is trivial since $M\otimes M$ is generated by $m_1\otimes m_2=\{m_1,m_2\}$.
		
		It has a universal central extension $(M\otimes M\xrightarrow{\Id_{M\otimes M}}M\otimes M,[-,-],[-,-])$ and a universal compatible central extension $(M\otimes (M\otimes M)\xrightarrow{\Id_M\otimes\partial}M\otimes M,*,\lBrace -,-\rBrace)$, where $(m_1\otimes m_2)*(m\otimes (m'\otimes m''))=[[m_1,m_2],m]\otimes (m'\otimes m'')+m\otimes ([m_1,m_2]\otimes [m', m''])$ and $\lBrace m_1\otimes m_2,m_3\otimes m_4\rBrace=[m_1,m_2]\otimes (m_3\otimes m_4)$.
		
		We know that both are isomorphic with $h$ with inverse $h'$.
		
		Then $h_1\colon M\otimes M\xrightarrow{}M\otimes(M\otimes M)$ is an isomorphisms of Lie $K$-algebras with inverse $h'$.
		
		Using the previous demonstration we know that the morphisms are the following ones:
		
		Note that $\Phi_1=\Id_{M\otimes M}$ since $\{-,-\}=-\otimes -$, then $\Phi_1(m\otimes m')=\{m,m'\}=m\otimes m'$.
		
		We have that $h_1(m\otimes m')=\lBrace \overline{m},\overline{m'}\rBrace$ where $c_2(\overline{n})=n$ for $n\in M$. But if $n\in M$ since $M$ is perfect, we know that $n=[n_1,n_2]$ and we can take $\overline{n}$ as $n_1\otimes n_2$ since $c_2(n_1\otimes n_2)=[n_1,n_2]=n$.
		
		Using this notation, we have that if $m\otimes m'\in M\otimes M$, then:
		\begin{align*}
		&h_1(m\otimes m')=\lBrace\overline{m},\overline{m'}\rBrace=\lBrace m_1\otimes m_2,m'_1\otimes m'_2\rBrace\\
		&=[m_1,m_2]\otimes \{m'_1,m'_2\}=m\otimes (m'_1\otimes m'_2).
		\end{align*}
		Where $m'_1, m'_2\in M$ verifies $[m'_1,m'_2]=m'$.
		
		Since $\Phi_1=\Id_{M\otimes M}$ we have that $\Phi_1=c_1\circ h_1$. Since we know that $h_1$ is invertible that $h'_1=c_1$. This is coherent since in the other hand we have $c=\Phi\circ h'$, so we have $c_1=\Psi_1\circ h'=\Id_{M\otimes M}\circ h'=h'$.
		
		For that we only need to find which is the expression of $c_1$ to check which is the inverse of the isomorphism $h$.
		
		For $m\otimes (m'\otimes m'')\in M\otimes (M\otimes M)$ we have:
		\begin{equation*}
		c_1(m\otimes (m'\otimes m''))=m\cdot (m'\otimes m'')=m\otimes [m',m''].
		\end{equation*}
		
		We can check now if they are inverses one from another:
		\begin{equation*}
		c_1\circ h_1(m\otimes m')=c_1(m\otimes (m'_1\otimes m'_2))=m\otimes [m'_1,m'_2]=m\otimes m'.
		\end{equation*}
		\begin{equation*}
		h_1\circ c_1(m\otimes (m'\otimes m''))=h_1(m\otimes[m',m''])=m\otimes ([m',m'']_1,[m',m'']_2).
		\end{equation*}
		Where $[m',m'']_i\in M$ for $i\in \{1,2\}$ and $[[m',m'']_1,[m',m'']_2]=[m',m'']$. For this last equality we can take $[m',m'']_1=m$ and $[m',m'']_2=m''$ as images by $c_2$. Then:
		\begin{equation*}
		h_1\circ c_1(m\otimes (m'\otimes m''))=m\otimes ([m',m'']_1,[m',m'']_2)=m\otimes (m'\otimes m'').
		\end{equation*}
		So we have explicitly the isomorphism.
	\end{proof}

\end{document}